\newcommand{\C}{\mathcal{C}}
\newcommand{\mcH}{\mathrel{\mathcal{H}}}
\newcommand{\mcR}{\mathrel{\mathcal{R}}}
\newcommand{\mcL}{\mathrel{\mathcal{L}}}
\newcommand{\mcD}{\mathrel{\mathcal{D}}}
\newcommand{\mcmu}{\mathrel{\mu}}
\newcommand{\ZM}{\text{ZM}}
\DeclareMathOperator{\dom}{dom}
\newtheorem{prop}{Proposition}[section]
\newtheorem{thm}[prop]{Theorem}
\newtheorem{cor}[prop]{Corollary}
\newtheorem{lem}[prop]{Lemma}
\theoremstyle{definition}
\newtheorem{defn}[prop]{Definition}
\newtheorem{exmp}[prop]{Example}
\newtheorem{rem}[prop]{Remark}
\newlist{thmenum}{enumerate}{10}
\setlist[thmenum,1]{label=\textnormal{(\alph*)}}
\setlist[thmenum,2]{label=\textnormal{(\roman*)}}
\begin{document}

\title[Zigzag Maps and Path Categories]{On zigzag maps and the path category of an inverse semigroup}

\author[A. Donsig]{Allan Donsig}
\address{Department of Mathematics\\ 
University of Nebraska-Lincoln\\ 
Lincoln, NE  68588} 
\email{adonsig@unl.edu}

\author[J. Gensler]{Jennifer Gensler}
\address{Department of Mathematics and Statistics\\
California State University, Long Beach\\
1250 Bellflower Blvd\\
Long Beach, CA 90840}

\author[H. King]{Hannah King}
\address{Department of Mathematics\\
Taylor University\\
236 W Reade Ave\\
Upland, IN 46989}

\author[D. Milan]{David Milan}
\address{Department of Mathematics\\
The University of Texas at Tyler\\
3900 University Boulevard\\
Tyler, TX 75799}
\email{dmilan@uttyler.edu}

\author[R. Wdowinksi]{Ronen Wdowinski}
\address{Department of Mathematics\\
Rice University\\
6100 Main St\\
Houston, TX 77005}
\thanks{The second through fifth authors were supported by an NSF grant (DMS-1659221).}

\date{\today}
\subjclass[2010]{20M18}

\begin{abstract} We study the path category of an inverse semigroup admitting unique maximal idempotents and give an abstract characterization of the inverse semigroups arising from zigzag maps on a left cancellative category. As applications we show that every inverse semigroup is Morita equivalent to an inverse semigroup of zigzag maps and hence
the class of Cuntz-Krieger $C^*$-algebras of singly aligned categories include the tight $C^*$-algebras of all countable inverse semigroups, up to Morita equivalence.
\end{abstract}

\maketitle

\section{Introduction}

Inverse semigroups have played a role of increasing prominence in the study of $C^*$-algebras, particularly in the study of graph algebras and their generalizations. If $\Gamma$ is a directed graph, then the graph $C^*$-algebra $C^*(\Gamma)$ is generated by a collection of partial isometries coming from the category of finite paths in $\Gamma$. 

Those partial isometries also generate the graph inverse semigroup $S_\Gamma$ defined by Ash and Hall in \cite{AshHall}. Jones and Lawson characterized graph inverse semigroups as combinatorial proper Perrot semigroups \cite{LawsonGraph}. In particular they showed how to recover the path category of a graph from such a semigroup. To construct the category, it is crucial that every nonzero idempotent lies beneath a unique maximal idempotent.

In this paper, we study zigzag inverse semigroups $\ZM(\C)$ that arise from zigzag maps on a left cancellative category $\C$. We show that $\ZM(\C)$ admits unique maximal idempotents and that one can recover $\C$ as the path category of the semigroup. In Theorem \ref{thm:main} the zigzag inverse semigroups are characterized as the inverse semigroups $S$ with zero satisfying three axioms:
\begin{enumerate}

\item[(Z1)] $S$ admits unique maximal idempotents,

\item[(Z2)] The paths in $S$ generate $S$, and

\item[(Z3)] $S$ is right reductive on domain paths.

\end{enumerate}

The final condition is motivated by work of Cherubini and Petrich on the inverse hull of a right cancellative semigroup \cite{CherubiniPetrich}.  
Briefly, (Z3) requires that, for $s,t \in S$,  if $sx = tx$ for all $x$ in a suitable subset of $S$ (depending on $s$ and $t$), then $s = t$; see Definition~\ref{def:dompath} for details.

Zigzag inverse semigroups appeared in Spielberg's construction of the $C^*$-algebra of a category of paths \cite{spielberg} and more recently in the construction due to B{\'e}dos, Kaliszewski, Quigg, and Spielberg of the $C^*$-algebra of a left cancellative small category \cite{BKQS}. 
Exel and Steinberg \cite{ExelSteinberg} have recently studied a related and even more general class of semigroups, though that construction is not considered in this paper.

As an application of our characterization, we show that every inverse semigroup is Morita equivalent to $\ZM(\C)$ for some left cancellative category $\C$. It follows by a result of Steinberg that the class of Cuntz-Krieger $C^*$-algebras associated with singly aligned left cancellative categories in \cite{BKQS} includes the tight $C^*$-algebras of all countable inverse semigroups up to Morita equivalence.

\section{Preliminaries}
An \emph{inverse semigroup} is a semigroup $S$ such that for each $s$ in $S$ there exists a unique $s^*$ in $S$ such that
\[
	s = ss^*s \quad \text{and}  \quad s^* = s^* s s^*.
\]
The set of idempotents of $S$, denoted $E(S)$, is a commutative subsemigroup of $S$. The natural partial order is defined on $S$ by $s \leq t$ if and only if $s = te$ for some $e \in E(S)$. Green's relations take an especially nice form for inverse semigroups: we have $s \mcL t$ if and only if $s^*s = t^*t$, $s \mcR t$ if and only if $ss^* = tt^*$, and $\mcH \,=\, \mcL \cap \mcR$. Moreover, $s \mcD t$ if and only if there exists $a,b \in S$ such that $a^*a = t^*t$, $aa^* = s^*s$, $b^*b = tt^*$, $bb^* = ss^*$, and $t = b^* s a$. For $e,f \in E(S)$ we have $e \mcD f$ if and only if there exists $a \in S$ with $e = a^*a$ and $f = aa^*$. 

The most important example of an inverse semigroup is the semigroup $I(X)$ of partial bijections on a set $X$. If $g \in I(X)$ with domain $A$ and range $B$ and $f \in I(X)$ with domain $C$ and range $D$, then the product $fg$ is the composition of the functions on the largest possible domain. That is, $fg$ is the bijection of $g^{-1}(B \cap C)$ onto $f(B \cap C)$. The map with empty domain is denoted by $0$. The inverse of $f$ in $I(X)$ is given by $f^{-1}$. For any positive integer $n$, let $I_{n} = I(\{1,2, \dots, n\})$. 

We will use the following conventions for a small category $\C$. The objects of $\C$ are denoted $\C^0$. There are maps $r,s:\C \to \C^0$ called the range and source maps. For $\alpha, \beta \in \C,$ the product $\alpha \beta$ is defined if and only if $s(\alpha) = r(\beta)$. For $\alpha \in \C$, we write $\alpha \C = \{\alpha \beta : \beta \in \C \text{ and }s(\alpha) = r(\beta)\}$. Finally, $\C$ is \emph{left cancellative} if for any $\alpha, \beta, \gamma \in \C$ with $s(\alpha) = r(\beta)$ and $s(\alpha) = r(\gamma)$, $\alpha \beta = \alpha \gamma$ implies that $\beta = \gamma$.  We use LCSC for a left cancellative small category.

There is an equivalence relation on a LCSC  $\C$ defined by $\gamma_1 \sim \gamma_2$ if and only if  $\gamma_1 = \gamma_2 \lambda$ for some invertible $\lambda$. 
It is shown on page 5 of \cite{BKQS} that $\gamma_1 \C = \gamma_2 \C$ if and only if $\gamma_1 \sim \gamma_2$.

One natural example of a left cancellative category is the path category of a directed graph. A directed graph $\Lambda = (\Lambda^0, \Lambda^1, r, s)$ consists of countable sets $\Lambda^0$, $\Lambda^1$ and functions $r,s : \Lambda^1 \to \Lambda^0$ called the \emph{range} and \emph{source} maps, respectively. The elements of $\Lambda^0$ are called \emph{vertices}, and the elements of $\Lambda^1$ are called \emph{edges}. Given an edge $e$, $r_{e}$ denotes the range vertex of $e$ and $s_{e}$ denotes the source vertex. We denote by $\Lambda^*$ the collection of finite directed paths in $\Lambda$. The range and source maps $r,s$ can be extended to $\Lambda^*$ by defining $r_{\alpha} = r_{\alpha_n}$ and $s_{\alpha} = s_{\alpha_1}$ for a path $\alpha = \alpha_n \alpha_{n-1} \cdots \alpha_1$ in $\Lambda^*$. If $\alpha = \alpha_n \alpha_{n-1} \cdots \alpha_1$ and $\beta = \beta_m \beta_{m-1} \cdots \beta_1$ are paths with $s_{\alpha} = r_{\beta}$, we write $\alpha \beta$ for the path $\alpha_n \cdots \alpha_1 \beta_m \cdots \beta_1$. We refer to $\Lambda^*$ as the \emph{path category} of $\Lambda$.

The \textit{graph inverse semigroup} of the directed graph $\Lambda$ is the set
\[ S_{\Lambda} = \{(\alpha,\beta) \in \Lambda^* \times \Lambda^* : s_{\alpha} = s_{\beta} \} \cup \{ 0 \} \]
with products defined by
\[ (\alpha,\beta) (\gamma,\nu) = \left\{\begin{array}{ll}
        (\alpha \gamma', \nu) & \mbox{if $\gamma = \beta \gamma'$} \\
        (\alpha,\nu\beta') & \mbox{if $\beta = \gamma \beta'$} \\
        0 & \mbox{otherwise}
   \end{array} \right. \]
The inverse is given by $(\alpha,\beta)^{*} = (\beta,\alpha)$. 

\section{Zigzag maps and LCSCs} 

In his work generalizing the $C^*$-algebras of higher rank graphs, Spielberg introduced the notion of zigzag maps on a category of paths $\Lambda$ \cite{spielberg}. It was shown in \cite{DonsigMilan} that Spielberg's $C^*$-algebra is isomorphic to the tight $C^*$-algebra of the inverse semigroup $\ZM(\Lambda)$ of zigzag maps on $\Lambda$. These results were recently generalized in \cite{BKQS} and \cite{spielberg2} to the $C^*$-algebras arising from a left cancellative small category (LCSC) $\C$. 

We outline the construction of the inverse semigroup $\ZM(\C)$ of a LCSC $\C$. For more details, see section 7 of \cite{BKQS}. Given $\alpha$ in $\C$, there is a partial bijection $\tau_{\alpha} : s(\alpha) \C \to \alpha \C$ defined by $\tau_{\alpha}(x) = \alpha x$. A \emph{zigzag} in $\C$ is an even tuple
\[
\zeta = (\alpha_1, \beta_1, \alpha_2, \beta_2, \dots, \alpha_n, \beta_n)
\]
where $\alpha_i, \beta_i$ in $\C$ with $r(\alpha_i) = r(\beta_i)$ for $i = 1, \dots, n$ and $s(\beta_i) = s(\alpha_{i+1})$ for $i = 1, \dots, n-1$. Given a zigzag $\zeta = (\alpha_1, \beta_1, \alpha_2, \beta_2, \dots, \alpha_n, \beta_n)$ one defines an associated \emph{zigzag map} $\phi_{\zeta}$ in $I(\C)$ by
\[
	\phi_{\zeta} := \tau_{\alpha_1}^{-1} \tau_{\beta_1} \cdots \tau_{\alpha_n}^{-1} \tau_{\beta_n}. 
\]
The inverse semigroup $\ZM(\C)$ is the subsemigroup of $I(\C)$ consisting of all zigzag maps and the empty function $0$. We refer to $\ZM(\C)$ as a \emph{zigzag inverse semigroup}.

Though this semigroup looks unruly at first, it can take a nice form in certain cases. A LCSC $\C$ is called \emph{singly aligned} if, for every $\alpha$ and $\beta$ in $\C$ such that $\alpha \C \cap \beta \C$ is nonempty, there exists $\gamma$ in $\C$ such that $\alpha \C \cap \beta \C = \gamma \C$. The path category of a directed graph is an example of a singly aligned LCSC.

The following result is essentially due to Spielberg.  

\begin{lem}(Spielberg) Let $\C$ be a LCSC and let $\alpha, \beta \in \C$ with $\alpha \C \cap \beta \C = \gamma \C$. Then 
\[
\tau_{\beta}^{-1} \tau_{\alpha} = \tau_{\gamma^{\beta}} \tau_{\gamma^{\alpha}}^{-1}.
\]
where $\gamma^{\beta} = \tau_{\beta}^{-1}(\gamma)$ and $\gamma^{\alpha} = \tau_{\alpha}^{-1}(\gamma)$.
Moreover, the map $\tau_{\gamma^{\beta}} \tau_{\gamma^{\alpha}}^{-1}$ does not depend on the choice of $\gamma$.
\end{lem}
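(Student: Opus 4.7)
My plan is to verify the equation of partial bijections by showing that both sides have the same domain and then agree pointwise on it. First, I would identify the domain of $\tau_\beta^{-1}\tau_\alpha$: an element $x \in s(\alpha)\C$ lies in this domain precisely when $\alpha x \in \alpha\C \cap \beta\C = \gamma\C$, and using $\gamma = \alpha\gamma^\alpha$ together with left cancellation, this reduces to $x \in \gamma^\alpha\C$. The domain of the right-hand side $\tau_{\gamma^\beta}\tau_{\gamma^\alpha}^{-1}$ is manifestly $\gamma^\alpha\C$ as well, noting that $s(\gamma^\alpha) = s(\gamma) = s(\gamma^\beta)$ so that the composition is well-defined.

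Next, for $x = \gamma^\alpha z$ in the common domain, I would trace both compositions. The left side sends $\gamma^\alpha z \mapsto \alpha\gamma^\alpha z = \gamma z = \beta \gamma^\beta z \mapsto \gamma^\beta z$, where the final preimage is extracted by left cancellation by $\beta$. The right side sends $\gamma^\alpha z \mapsto z \mapsto \gamma^\beta z$. Thus the two partial bijections coincide.

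For the moreover clause, suppose $\gamma'$ is another element with $\alpha\C \cap \beta\C = \gamma'\C$, so that $\gamma'\C = \gamma\C$. By the equivalence discussed in the preliminaries, $\gamma' = \gamma\lambda$ for some invertible $\lambda \in \C$. Applying left cancellation to $\alpha{\gamma'}^{\alpha} = \gamma' = \alpha\gamma^\alpha\lambda$ yields ${\gamma'}^{\alpha} = \gamma^\alpha\lambda$, and similarly ${\gamma'}^{\beta} = \gamma^\beta\lambda$. Then $\tau_{{\gamma'}^{\alpha}} = \tau_{\gamma^\alpha}\tau_\lambda$ and $\tau_{{\gamma'}^{\beta}} = \tau_{\gamma^\beta}\tau_\lambda$, so $\tau_{{\gamma'}^{\beta}}\tau_{{\gamma'}^{\alpha}}^{-1} = \tau_{\gamma^\beta}\tau_\lambda\tau_\lambda^{-1}\tau_{\gamma^\alpha}^{-1}$. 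Since $\lambda$ is invertible, $\tau_\lambda\tau_\lambda^{-1}$ is the identity on $\lambda\C = r(\lambda)\C = s(\gamma)\C$, which is exactly the range $s(\gamma^\alpha)\C$ of $\tau_{\gamma^\alpha}^{-1}$, giving the desired equality.

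The only real subtlety is careful bookkeeping of the domains and ranges of the partial bijections involved; the key algebraic input throughout is left cancellation in $\C$, which is what allows one to extract preimages under each $\tau_{-}$ unambiguously. I do not anticipate any deeper obstacle.
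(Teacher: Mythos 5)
Your proof is correct and follows essentially the same route as the paper, which defers the first equality to the direct domain-and-pointwise verification of Spielberg's Lemma~3.3 (exactly the computation you carry out: both sides have domain $\gamma^{\alpha}\C$ and send $\gamma^{\alpha}z$ to $\gamma^{\beta}z$). The only divergence is the ``moreover'' clause, where the paper gets independence of the choice of $\gamma$ for free --- any $\delta$ with $\delta\C = \alpha\C\cap\beta\C$ yields a map equal to $\tau_{\beta}^{-1}\tau_{\alpha}$ --- while you reprove it directly via the invertible $\lambda$ with $\gamma'=\gamma\lambda$; both arguments are valid, yours just does a little extra work.
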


The argument for the first equality follows the proof of Lemma~3.3 in \cite{spielberg}.  
For any $\delta$ with $\alpha \C \cap \beta \C = \delta \C$,  since $ \tau_{\delta^{\beta}} \tau_{\delta^{\alpha}}^{-1}$ also equals $\tau_{\beta}^{-1} \tau_{\alpha}$, the map does not depend on the choice of $\gamma$.




We can use the lemma to give a nice description of the inverse semigroup of a singly aligned category that is reminiscent of the definition of a graph inverse semigroup.

\begin{thm}\label{thm:singlyalignedproducts} Let $\C$ be a singly aligned LCSC. Then 
\[
\ZM(\C) = \{ \tau_{\alpha}\tau_{\beta}^{-1} : s(\alpha) = s(\beta) \} \cup \{0\}.
\]
Moreover we have  
\[
\tau_{x}\tau_{\beta}^{-1}\tau_{\alpha}\tau_{y}^{-1} = \begin{cases}
		\tau_{x \gamma^{\beta}} \tau_{y \gamma^{\alpha}}^{-1} & \text{ if } \alpha \C \cap \beta \C = \gamma \C \\
		0 & \text{ otherwise}.
	\end{cases}
\]
\end{thm}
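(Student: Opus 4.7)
The plan is to prove the product formula (b) first using Spielberg's lemma, and then to derive the characterization (a) by induction on zigzag length with (b) as the engine.

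For (b), I would apply Spielberg's lemma directly to the middle pair $\tau_\beta^{-1}\tau_\alpha$. If $\alpha\C\cap\beta\C$ is empty, then the domain $\{x\in s(\alpha)\C:\alpha x\in\beta\C\}$ of $\tau_\beta^{-1}\tau_\alpha$ is empty, so the whole four-fold product is $0$. Otherwise, single alignment gives $\alpha\C\cap\beta\C=\gamma\C$ for some $\gamma$, and Spielberg's lemma rewrites $\tau_\beta^{-1}\tau_\alpha$ as $\tau_{\gamma^\beta}\tau_{\gamma^\alpha}^{-1}$. Substituting yields $\tau_x\tau_{\gamma^\beta}\tau_{\gamma^\alpha}^{-1}\tau_y^{-1}$, and one merges adjacent same-direction factors using $\tau_p\tau_q=\tau_{pq}$ and its inverse $\tau_q^{-1}\tau_p^{-1}=\tau_{pq}^{-1}$, valid whenever $s(p)=r(q)$. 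The composability $s(x)=s(\beta)=r(\gamma^\beta)$ and $s(y)=s(\alpha)=r(\gamma^\alpha)$ follows from the standing hypotheses on $\tau_x\tau_\beta^{-1}$ and $\tau_\alpha\tau_y^{-1}$, and one further checks $s(x\gamma^\beta)=s(\gamma)=s(y\gamma^\alpha)$ so that the output genuinely lies in the claimed set.

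For (a), I would first observe that every element $\tau_\alpha\tau_\beta^{-1}$ with $s(\alpha)=s(\beta)$ is itself a zigzag map, realized by the zigzag $(r(\alpha),\alpha,\beta,r(\beta))$: its matching conditions reduce to $s(\alpha)=s(\beta)$, and the associated map simplifies to $\tau_\alpha\tau_\beta^{-1}$ since $\tau_{r(\alpha)}$ and $\tau_{r(\beta)}$ are identities on their respective sets. Conversely, I would induct on zigzag length $n$. The base case $n=1$ is Spielberg's lemma applied to $\tau_{\alpha_1}^{-1}\tau_{\beta_1}$, which is either $0$ or $\tau_{\gamma^{\alpha_1}}\tau_{\gamma^{\beta_1}}^{-1}$ with equal sources $s(\gamma)$. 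For the inductive step, I split $\phi_\zeta$ as the product of the first block $\tau_{\alpha_1}^{-1}\tau_{\beta_1}$ and a length-$(n-1)$ tail, apply the inductive hypothesis to each factor, and invoke (b) to conclude that the product still has the claimed form (or is $0$).

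The main obstacle is the careful bookkeeping of source/range constraints throughout: one must verify both that every intermediate concatenation such as $x\gamma^\beta$ is a legal morphism of $\C$, and that the resulting element $\tau_{x\gamma^\beta}\tau_{y\gamma^\alpha}^{-1}$ meets the matching-source requirement $s(x\gamma^\beta)=s(y\gamma^\alpha)$ so that it actually lies in the claimed subset of $\ZM(\C)$.
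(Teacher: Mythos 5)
Your proposal is correct and follows essentially the same route as the paper, which simply invokes repeated application of Spielberg's lemma to rewrite a zigzag map in the form $\tau_{\alpha}\tau_{\beta}^{-1}$ and to derive the product formula; your version just organizes that repetition as an explicit induction on zigzag length with the product formula proved first, and supplies the (correct) source/range bookkeeping and the identity-padding zigzag $(r(\alpha),\alpha,\beta,r(\beta))$ realizing $\tau_{\alpha}\tau_{\beta}^{-1}$ that the paper leaves implicit.
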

\begin{proof}

By definition, $\ZM(\C)$ consists of partial bijections 
\[
\tau_{\alpha_1}^{-1} \tau_{\beta_1} \tau_{\alpha_2}^{-1} \tau_{\beta_2} \dots \tau_{\alpha_n}^{-1} \tau_{\beta_n}
\]
where $\zeta = (\alpha_1, \beta_1, \dots, \alpha_n, \beta_n)$ is a zigzag. By repeated application of the above lemma, one can put an element of $\ZM(\C)$ in the required form. The formula for the product 
also follows from the lemma.
\end{proof}

We recall that $\gamma \C= \delta \C$ holds if and only if $\gamma = \delta \lambda$ for some invertible $\lambda \in \C$, as observed on \cite[page~5]{BKQS}.

\begin{prop}\label{prop:singlyalignedequality} Consider nonzero $\tau_{\alpha}\tau_{\beta}^{-1}$ and $\tau_{\gamma}\tau_{\sigma}^{-1}$ in $\ZM(\C)$. Then $\tau_{\alpha}\tau_{\beta}^{-1} = \tau_{\gamma}\tau_{\sigma}^{-1}$ if and only if there exists an invertible $\lambda$ in $\C$ such that $\beta = \delta \lambda$ and $\alpha = \gamma \lambda$.
\end{prop}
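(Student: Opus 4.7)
The plan is to extract an invertible $\lambda$ from the equality of \emph{domains} of the two partial bijections and then upgrade it, by evaluating at a single well-chosen point, to the claimed relation between $\alpha$ and $\gamma$.

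For the sufficiency direction, assuming $\alpha = \gamma \lambda$ and $\beta = \sigma \lambda$ for some invertible $\lambda \in \C$, I would use the obvious identities $\tau_{\gamma\lambda} = \tau_\gamma \tau_\lambda$ and $\tau_{\sigma\lambda} = \tau_\sigma \tau_\lambda$ (immediate from associativity in $\C$) to write
\[
\tau_\alpha \tau_\beta^{-1} \;=\; \tau_\gamma \tau_\lambda \tau_\lambda^{-1} \tau_\sigma^{-1}.
\]
Since $\lambda$ is invertible, $\tau_\lambda$ is a bijection from $s(\lambda)\C$ onto $\lambda\C = r(\lambda)\C = s(\sigma)\C$, so $\tau_\lambda \tau_\lambda^{-1}$ is the identity on $s(\sigma)\C$, which contains the image of $\tau_\sigma^{-1}$. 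Hence the middle factor cancels and the product collapses to $\tau_\gamma \tau_\sigma^{-1}$.

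For the necessity direction, my first step would be to identify the domain and range of a nonzero element $\tau_\alpha\tau_\beta^{-1}$: since $\tau_\beta^{-1}$ is a bijection from $\beta\C$ onto $s(\beta)\C = s(\alpha)\C$, which is the full domain of $\tau_\alpha$, the composition is defined on all of $\beta\C$ and its range is $\alpha\C$. The assumption $\tau_\alpha\tau_\beta^{-1} = \tau_\gamma\tau_\sigma^{-1}$ therefore forces $\beta\C = \sigma\C$, which by the fact recalled from \cite{BKQS} yields an invertible $\lambda \in \C$ with $\beta = \sigma\lambda$. The next step, the only one with any real content, is to show that this \emph{same} $\lambda$ satisfies $\alpha = \gamma\lambda$. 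I would do this by evaluating both sides of the equality at the single point $\beta \in \beta\C = \sigma\C$: the left-hand side yields $\tau_\alpha(\tau_\beta^{-1}(\beta)) = \tau_\alpha(s(\beta)) = \alpha$, while the right-hand side yields $\tau_\gamma(\tau_\sigma^{-1}(\sigma\lambda)) = \tau_\gamma(\lambda) = \gamma\lambda$, and comparing gives $\alpha = \gamma\lambda$.

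The main subtlety is not hard but is worth flagging: from $\beta\C = \sigma\C$ and $\alpha\C = \gamma\C$ alone one only obtains two possibly distinct invertibles; the content of the proposition is that a single invertible works simultaneously, and the clean way to see this is to evaluate the partial bijection at one strategically chosen argument rather than comparing the domain and range relations independently.
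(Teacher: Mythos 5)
Your proof is correct and follows essentially the same route as the paper's: for necessity, extract $\lambda$ from $\beta\C = \sigma\C$ and then evaluate both maps at the single point $\beta$ to get $\alpha = \gamma\lambda$, exactly as in the paper. The only cosmetic difference is in the converse, where you cancel $\tau_\lambda\tau_\lambda^{-1}$ as an identity on $s(\sigma)\C$ while the paper checks agreement pointwise on $\beta\C$; the content is identical.
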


\begin{proof} Suppose that $\tau_{\alpha}\tau_{\beta}^{-1} = \tau_{\gamma}\tau_{\sigma}^{-1}$. Then $\beta \C = \sigma \C$ and hence there exists invertible $\lambda$ such that $\beta = \sigma \lambda$. Moreover 
\[
\alpha = \tau_{\alpha}\tau_{\beta}^{-1}(\beta) = \tau_{\gamma}\tau_{\sigma}^{-1}(\beta) = \gamma \lambda.
\]

Conversely, suppose that $\beta = \sigma \lambda$ and $\alpha = \gamma \lambda$ for some invertible $\lambda$ in $\C$. It follows that $\tau_{\alpha}\tau_{\beta}^{-1}$ and $\tau_{\gamma}\tau_{\sigma}^{-1}$ have the same domain $\beta \C$. For $x$ in $\beta \C$, write $x = \beta y = \sigma \lambda y$. Then $\tau_{\alpha}\tau_{\beta}^{-1}(x) = \alpha y = \gamma \lambda y = \tau_{\gamma}\tau_{\sigma}^{-1}(x)$.
\end{proof}

Jones and Lawson also considered singly aligned left cancellative categories in \cite{LawsonGraph}, referring to them as \emph{Leech categories} because of their connection with earlier work of Leech on constructing inverse monoids from small categories~\cite{Leech}.  
Jones and Lawson define an inverse semigroup $S(\C)$ of a singly aligned LCSC $\C$, which we now describe. Define $\equiv$ on the set $U = \{ (\alpha, \beta) \in \C : s(\alpha) = s(\beta)\}$ where
\[
(\alpha, \beta) \equiv (\alpha', \beta') 
\]
if and only if $\alpha = \alpha' \lambda$ and $\beta = \beta' \lambda$ for some invertible $\lambda$ in $\C$. Then $\equiv$ is an equivalence relation, and the equivalence class of $(\alpha, \beta)$ is denoted by $[\alpha, \beta]$. Now we let
\[
	S(\C) = \{ [\alpha,\beta] : s(\alpha) = s(\beta) \} \cup \{0\},
\]
with multiplication given by 
\[
[\alpha, \beta][\alpha', \beta'] = [\alpha x, \gamma y] \text{ if } \beta \C \cap \alpha' \C = \gamma \C \text{ and } \gamma = \beta x = \alpha' y
\]
and $[\alpha, \beta][\alpha', \beta'] = 0$ otherwise. The following proposition is now easily verified using Theorem \ref{thm:singlyalignedproducts} and Propostion \ref{prop:singlyalignedequality}.

\begin{prop} Let $\C$ be a singly aligned LCSC. Then $\ZM(\C)$ is isomorphic to $S(\C)$.
\end{prop}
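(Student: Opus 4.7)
The plan is to build the obvious bijection $\Phi : S(\C) \to \ZM(\C)$ sending $[\alpha,\beta] \mapsto \tau_\alpha \tau_\beta^{-1}$ and $0 \mapsto 0$, and then to verify that it is a semigroup homomorphism by matching the two product formulas term by term.

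First I would check that $\Phi$ is well defined and injective. The equivalence relation $\equiv$ defining $S(\C)$ is, by construction, \emph{exactly} the condition appearing in Proposition~\ref{prop:singlyalignedequality}: $(\alpha,\beta) \equiv (\alpha',\beta')$ iff there is an invertible $\lambda$ with $\alpha = \alpha'\lambda$ and $\beta = \beta'\lambda$, which happens iff $\tau_\alpha \tau_\beta^{-1} = \tau_{\alpha'}\tau_{\beta'}^{-1}$. So Proposition~\ref{prop:singlyalignedequality} gives both well-definedness and injectivity in one stroke. Surjectivity is immediate from the first assertion of Theorem~\ref{thm:singlyalignedproducts}, since every element of $\ZM(\C)$ already has the form $\tau_\alpha \tau_\beta^{-1}$ with $s(\alpha) = s(\beta)$, and $0$ is in the image by definition.

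The substantive step is multiplicativity. Given $[\alpha,\beta], [\alpha',\beta'] \in S(\C)$ with $\beta\C \cap \alpha'\C = \gamma \C$, write $\gamma = \beta x = \alpha' y$. On the $S(\C)$-side the product is (what should read) $[\alpha x, \beta' y]$. On the $\ZM(\C)$-side Theorem~\ref{thm:singlyalignedproducts} gives
\[
\tau_\alpha \tau_\beta^{-1} \tau_{\alpha'} \tau_{\beta'}^{-1} \;=\; \tau_{\alpha \gamma^\beta} \tau_{\beta' \gamma^{\alpha'}}^{-1},
\]
and by the preceding lemma $\gamma^\beta = \tau_\beta^{-1}(\gamma) = x$ and $\gamma^{\alpha'} = \tau_{\alpha'}^{-1}(\gamma) = y$, so this equals $\tau_{\alpha x} \tau_{\beta' y}^{-1} = \Phi([\alpha x, \beta' y])$. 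In the case $\beta\C \cap \alpha'\C = \emptyset$ both products are zero, and products involving $0$ are trivially handled on either side. The only thing one should verify to close the loop is that the product in $S(\C)$ is independent of the choice of representatives of $\gamma$; this follows from the uniqueness assertion in the preceding lemma, or equivalently from the fact that a different choice $\gamma' = \gamma \mu$ (with $\mu$ invertible) changes $x,y$ to $\mu^{-1} x, \mu^{-1} y$, so $(\alpha x, \beta' y) \equiv (\alpha \mu^{-1}\mu x, \beta' \mu^{-1}\mu y)$ after reinserting $\mu$.

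The main obstacle is really just bookkeeping: keeping careful track of which representative of the intersection $\beta\C \cap \alpha'\C$ is being used and confirming that the $x,y$ on the $S(\C)$-side agree with the $\gamma^\beta, \gamma^{\alpha'}$ on the $\ZM(\C)$-side. Once that is done, the isomorphism is essentially a tautology extracted from Theorem~\ref{thm:singlyalignedproducts} and Proposition~\ref{prop:singlyalignedequality}.
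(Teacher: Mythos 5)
Your proof is correct and follows exactly the route the paper intends: the paper offers no written argument beyond the remark that the isomorphism is ``easily verified using Theorem~\ref{thm:singlyalignedproducts} and Proposition~\ref{prop:singlyalignedequality},'' and your write-up supplies precisely those verifications (well-definedness and injectivity from Proposition~\ref{prop:singlyalignedequality}, surjectivity and multiplicativity from Theorem~\ref{thm:singlyalignedproducts}). You were also right to read the paper's product formula $[\alpha x, \gamma y]$ as a typo for $[\alpha x, \beta' y]$.
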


Many of the following properties can now be gleaned from various places in the literature where $S(\C)$ appears or quickly verified by the reader. 
We include a proof of the last property.

\begin{prop} Let $\C$ be a singly aligned LCSC.
\begin{enumerate}

\item $E(\ZM(\C)) = \{ \tau_{\alpha}\tau_{\alpha}^{-1} : \alpha \in \C \} \cup \{0\}$.
\item $\tau_{\alpha} \tau_{\beta}^{-1} \leq \tau_{\gamma} \tau_{\sigma}^{-1}$ if and only if there exists $\mu \in \C$ such that $\alpha = \gamma \mu$ and $\beta = \sigma \mu$.
\item $\ZM(\C)$ is $0$-E-unitary if and only if $\C$ is right cancellative.
\item $\tau_{\alpha} \tau_{\beta}^{-1} \mcL \tau_{\gamma} \tau_{\sigma}^{-1}$ if and only if $\beta \sim \sigma$.
\item $\tau_{\alpha} \tau_{\beta}^{-1} \mcR \tau_{\gamma} \tau_{\sigma}^{-1}$ if and only if $\alpha \sim \gamma$.
\item $\tau_{\alpha} \tau_{\beta}^{-1} \mcH \tau_{\gamma} \tau_{\sigma}^{-1}$ if and only if $\beta \sim \sigma$ and $\alpha \sim \gamma$.
\item $\tau_{\alpha} \tau_{\beta}^{-1} \mcmu \tau_{\gamma} \tau_{\sigma}^{-1}$ if and only if $\beta \sim \sigma$ and $\alpha y^{\beta} \sim \gamma y^{\sigma}$ for all $y \in \beta \C = \sigma \C$.

\end{enumerate}
\end{prop}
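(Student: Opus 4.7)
The plan is to reduce everything to the concrete normal form of Theorem \ref{thm:singlyalignedproducts} and the reparametrization criterion of Proposition \ref{prop:singlyalignedequality}, using the embedding $\ZM(\C) \subseteq I(\C)$. The embedding supplies two structural facts I will invoke repeatedly: idempotents in $I(\C)$ are identity maps on subsets, and the natural partial order coincides with restriction of partial maps.

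For (1), if $\tau_\alpha \tau_\beta^{-1}$ is idempotent, then it is the identity on its domain $\beta \C$; evaluating at $\beta$ forces $\alpha = \beta$, and conversely $\tau_\alpha \tau_\alpha^{-1}$ is evidently idempotent. For (2), $\tau_\alpha \tau_\beta^{-1} \leq \tau_\gamma \tau_\sigma^{-1}$ iff the first is a restriction of the second, so $\beta \C \subseteq \sigma \C$ yields some $\mu \in \C$ with $\beta = \sigma \mu$, and evaluation at $\beta$ forces $\alpha = \gamma \mu$; the converse is immediate. For (3), suppose $\C$ is right cancellative; any nonzero $\tau_\xi \tau_\xi^{-1} \leq \tau_\alpha \tau_\beta^{-1}$ gives, by (2), some $\nu$ with $\alpha \nu = \xi = \beta \nu$, whence $\alpha = \beta$ and $\tau_\alpha \tau_\beta^{-1}$ is idempotent. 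Conversely, any failure $\alpha \nu = \beta \nu$ with $\alpha \neq \beta$ in $\C$ (so $s(\alpha) = r(\nu) = s(\beta)$) exhibits $\tau_{\alpha\nu} \tau_{\alpha\nu}^{-1}$ as a nonzero idempotent below the non-idempotent $\tau_\alpha \tau_\beta^{-1}$, via (2) with witness $\nu$.

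For (4)--(6), the essential computation uses the product formula of Theorem \ref{thm:singlyalignedproducts} with $\alpha \C \cap \alpha \C = \alpha \C$ (take $\gamma = \alpha$, so $\gamma^\alpha = s(\alpha)$) to obtain
\[
(\tau_\alpha \tau_\beta^{-1})^* (\tau_\alpha \tau_\beta^{-1}) = \tau_\beta \tau_\alpha^{-1} \tau_\alpha \tau_\beta^{-1} = \tau_\beta \tau_\beta^{-1},
\]
and symmetrically $(\tau_\alpha \tau_\beta^{-1})(\tau_\alpha \tau_\beta^{-1})^* = \tau_\alpha \tau_\alpha^{-1}$. Then $s \mcL t$ iff $\tau_\beta \tau_\beta^{-1} = \tau_\sigma \tau_\sigma^{-1}$, which by Proposition \ref{prop:singlyalignedequality} is $\beta \sim \sigma$, proving (4); (5) is symmetric, and (6) follows by intersection.

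The main obstacle is (7), where $\mcmu$ denotes the maximum idempotent-separating congruence, characterized by $s \mcmu t$ iff $s \mcH t$ together with $s e s^{-1} = t e t^{-1}$ for every idempotent $e \leq s^{-1} s$. Writing $s = \tau_\alpha \tau_\beta^{-1}$ and $t = \tau_\gamma \tau_\sigma^{-1}$, the condition $\beta \sim \sigma$ appears in both directions (from the $\mcL$-part of $\mcH$ in one, by hypothesis in the other), and by (1) and (2) the idempotents below $s^{-1}s = \tau_\beta \tau_\beta^{-1}$ are exactly $\tau_y \tau_y^{-1}$ with $y \in \beta \C = \sigma \C$. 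Two nested applications of the product formula yield
\[
s (\tau_y \tau_y^{-1}) s^{-1} = \tau_{\alpha y^\beta} \tau_{\alpha y^\beta}^{-1}, \qquad t (\tau_y \tau_y^{-1}) t^{-1} = \tau_{\gamma y^\sigma} \tau_{\gamma y^\sigma}^{-1},
\]
so by Proposition \ref{prop:singlyalignedequality} equality for all $y$ is precisely $\alpha y^\beta \sim \gamma y^\sigma$ for every $y \in \beta \C$; specializing $y = \beta$ recovers the remaining $\mcR$-part $\alpha \sim \gamma$ of $\mcH$ automatically. The delicate steps are the double-conjugation computation, which requires careful bookkeeping through nested product formulas, and the recognition, via (1), that every idempotent below $s^{-1}s$ is of the form $\tau_y \tau_y^{-1}$, so testing on these principal idempotents suffices.
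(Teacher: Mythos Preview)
Your argument is correct throughout. The paper itself omits the proofs of (1)--(6), deferring to the literature and the reader, so there is nothing to compare there; your treatments of those items are clean and accurate.

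For (7) your route differs slightly from the paper's. The paper, for the converse, checks the conjugation condition $s e s^{*} = t e t^{*}$ against \emph{every} idempotent $\tau_{\mu}\tau_{\mu}^{-1}$, splitting into the two cases $\mu\C \cap \beta\C = \emptyset$ and $\mu\C \cap \beta\C = y\C$ (using the singly-aligned hypothesis to produce $y$). You instead invoke the sharper characterization of $\mu$ that only requires testing idempotents $e \le s^{*}s$, which by (1) and (2) are exactly the $\tau_{y}\tau_{y}^{-1}$ with $y \in \beta\C$; this eliminates the case split entirely, and you then recover the $\mathcal{R}$-half of $\mathcal{H}$ by specializing $y = \beta$. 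Both arguments rely on the same conjugation computation $s(\tau_{y}\tau_{y}^{-1})s^{*} = \tau_{\alpha y^{\beta}}\tau_{\alpha y^{\beta}}^{-1}$, so the core content is identical; your version is a bit more economical, while the paper's is more self-contained in that it does not assume the reader knows the restricted form of the $\mu$-criterion.
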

\begin{proof}

(7) We will write $s = \tau_{\alpha} \tau_{\beta}^{-1}$ and $t =  \tau_{\gamma} \tau_{\sigma}^{-1}$. First suppose that $s \mcmu t$. Since $\mu \subseteq \mathcal{H}$ it follows that $\beta \sim \sigma$. Let $y \in \beta \C$. Then $s \tau_{y} \tau_{y}^{-1} s^* = \tau_{\alpha y^{\beta}} \tau_{\alpha y^{\beta}}^{-1}$ and $t \tau_{y} \tau_{y}^{-1} t^* = \tau_{\gamma y^{\sigma}} \tau_{\gamma y^{\sigma}}^{-1}$. Then $\alpha y^{\beta} \sim \gamma y^{\sigma}$ by Proposition \ref{prop:singlyalignedequality}. 

For the converse, fix an idempotent $\tau_{\mu}\tau_{\mu}^{-1}$. If $\mu \C \cap \beta \C = \emptyset,$ then $\mu \C \cap \sigma \C = \emptyset$ and $s \tau_{\mu}\tau_{\mu}^{-1} s^* = 0 = t \tau_{\mu}\tau_{\mu}^{-1} t^*$. Otherwise, $\beta \C \cap \mu \C = y \C = \sigma \C \cap \mu \C$ for some $y$ in $\C$.
It follows that
\begin{align*}
\tau_{\alpha} \tau_{\beta}^{-1} \tau_{\mu}\tau_{\mu}^{-1} \tau_{\beta} \tau_{\alpha}^{-1} &= \tau_{\alpha y^{\beta}}^{-1} \tau_{y} \tau_{\beta}^{-1} \tau_{\alpha} \\
             &= \tau_{\alpha y^{\beta}}\tau_{\alpha y^{\beta}}^{-1} \\
             &= \tau_{\gamma y^{\sigma}}\tau_{\gamma y^{\sigma}}^{-1} \\
             &= \tau_{\gamma} \tau_{\sigma}^{-1} \tau_{\mu}\tau_{\mu}^{-1} \tau_{\sigma} \tau_{\gamma}^{-1}.
\end{align*}
Thus $s \mcmu t$.
\end{proof}

\section{The path category of an inverse semigroup admitting unique maximal idempotents}

Say that an inverse semigroup $S$ \emph{admits unique maximal idempotents} if for every nonzero idempotent, there exists a unique maximal idempotent above it.
Thus, we have a map that sends each nonzero idempotent $e$ to the (unique) maximal idempotent $e^{\circ}$ such that $e \leq e^{\circ}$. 
In this section we introduce the path category of such an inverse semigroup. 
We aim to characterize the semigroups $S$ for which $S$ is isomorphic to $\ZM(\C)$ for some LCSC $\C$. 
This goal is inspired by the characterization of graph inverse semigroups in \cite{LawsonGraph}. 

Jones and Lawson define $S$ to be a \emph{Perrot inverse semigroup} if it satisfies the following properties:

\begin{enumerate}

\item[(P1)] The semilattice of idempotents is unambiguous.

\item[(P2)] For each nonzero idempotent $e$ there are finitely many idempotents above $e$ in the natural partial order.

\item[(P3)] $S$ admits unique maximal idempotents.

\item[(P4)] Each nonzero $\mcD$-class of $S$ contains a maximal idempotent.

\end{enumerate}

Also, $S$ is a \emph{proper} Perrot inverse semigroup if it satisfies the above properties and, in addition, there is a unique maximal idempotent in any nonzero $\mcD$-class. Jones and Lawson obtained the following characterization:

\begin{thm}[\cite{LawsonGraph}] The graph inverse semigroups are precisely the combinatorial proper Perrot semigroups.
\end{thm}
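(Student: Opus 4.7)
The plan is to prove both directions separately. Recall that an inverse semigroup is \emph{combinatorial} when every $\mcH$-class is a singleton, and a semilattice is \emph{unambiguous} when any two elements either are comparable or have zero product.

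For the forward direction, let $\Lambda$ be a directed graph and consider $S_\Lambda$. The nonzero idempotents are exactly the pairs $(\alpha, \alpha)$ with $\alpha \in \Lambda^*$, and the multiplication formula gives $(\alpha, \alpha) \le (\beta, \beta)$ iff $\beta$ is a prefix of $\alpha$. Properties (P1) and (P2) follow immediately, since prefixes of a common path are nested and finite in number. The unique maximal idempotent above $(\alpha, \alpha)$ is the vertex $(r_\alpha, r_\alpha)$, giving (P3). A short computation shows $(\alpha, \beta) \mcD (\gamma, \delta)$ iff $s_\alpha = s_\gamma$, so each nonzero $\mcD$-class contains exactly one maximal idempotent, yielding (P4) and properness. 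Finally, $(\alpha, \beta) \mcH (\gamma, \delta)$ forces $(\alpha, \alpha) = (\gamma, \gamma)$ and $(\beta, \beta) = (\delta, \delta)$, whence $(\alpha, \beta) = (\gamma, \delta)$, so $S_\Lambda$ is combinatorial.

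For the converse, suppose $S$ is a combinatorial proper Perrot semigroup. I would reconstruct $\Lambda$ by taking $\Lambda^0$ to be the nonzero maximal idempotents of $S$ and $\Lambda^1$ to be the nonzero idempotents $f$ covered by a maximal idempotent, setting $r(f) = f^\circ$ and taking $s(f)$ to be the unique maximal idempotent $\mcD$-related to $f$, which exists by (P4) and properness. Under (P1) and (P2), every nonzero idempotent $e$ sits at the bottom of a unique maximal chain $e = e_n < e_{n-1} < \cdots < e_0 = e^\circ$ of immediate covers. Translating each cover step along the $\mcR$-direction to an edge of $\Lambda^1$ produces a canonical path $\alpha(e) \in \Lambda^*$ with range $e^\circ$ and length equal to the chain length.

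Define $\Phi \colon S \to S_\Lambda$ by $\Phi(0) = 0$ and $\Phi(s) = (\alpha(ss^*), \alpha(s^*s))$ for nonzero $s$; this is the candidate isomorphism. Injectivity follows from combinatoriality, since $\alpha(e)$ determines $e$ and the pair $(ss^*, s^*s)$ then determines $s$ (its $\mcH$-class being a singleton). Surjectivity follows from (P4), as a pair of paths sharing a common source vertex corresponds to a pair of $\mcD$-related idempotents, and picking a witness gives the required $s \in S$. The main obstacle is verifying that $\Phi$ is multiplicative. In $S_\Lambda$ the product $(\alpha, \beta)(\gamma, \nu)$ splits into three cases according to whether $\beta$ is a prefix of $\gamma$, $\gamma$ is a prefix of $\beta$, or neither; on the side of $S$, the product $st$ is governed by the idempotent $s^*s \cdot tt^*$, and unambiguity (P1) ensures that the meet of the chains associated with $s^*s$ and $tt^*$ is either a truncation of one of them or zero, precisely matching the case split. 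The chain decomposition $\alpha(\cdot)$ then converts this idempotent-level agreement into the required equality of paths, completing the proof that $\Phi$ is a semigroup isomorphism.
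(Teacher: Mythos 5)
The paper does not prove this theorem; it is quoted verbatim from Jones--Lawson \cite{LawsonGraph}, so there is no internal argument to compare against and your attempt must stand on its own. Your forward direction is essentially complete and correct: the identification of the idempotents, the order $(\alpha,\alpha)\leq(\beta,\beta)$ iff $\alpha=\beta\gamma$, and the resulting verification of (P1)--(P4), properness, and combinatoriality all go through. (Your claim that the maximal idempotent above $(\alpha,\alpha)$ is $(r_\alpha,r_\alpha)$ is in fact the one consistent with the paper's stated order; the paper's Example writes $(s_\alpha,s_\alpha)$ there, which appears to be a slip.)

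The converse, however, has genuine gaps at exactly the points where the work lies. First, the map $e\mapsto\alpha(e)$ is not actually constructed: ``translating each cover step along the $\mcR$-direction'' must be made precise. The mechanism is that, by (P4) and properness, each $e_{i-1}$ in the chain is $\mcD$-related to a unique maximal idempotent $g$, so there is $a\in S$ with $aa^*=g$ and $a^*a=e_{i-1}$; combinatoriality is what makes this $a$ \emph{unique} (its $\mcH$-class is a singleton), and conjugation $x\mapsto axa^*$ is an order isomorphism of $[0,e_{i-1}]$ onto $[0,g]$ carrying the cover $e_i\lessdot e_{i-1}$ to an edge. Without naming this element and its uniqueness, $\alpha(e)$ is not well defined, and you have not checked that consecutive edges so produced are composable (source of one equals range of the next). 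Second, surjectivity is not established: you argue that two paths with a common source come from $\mcD$-related idempotents, but this presupposes that \emph{every} path of $\Lambda$ arises as $\alpha(e)$ for some idempotent $e$, which requires an inductive descent argument (transporting an edge at the source vertex of $\alpha(e)$ down into $[0,e]$ via the same conjugation) that you never give. Third, multiplicativity is reduced to the assertion that the chain decomposition ``converts the idempotent-level agreement into the required equality of paths''; this is precisely the claim that $\alpha$ intertwines conjugation by $s$ with concatenation in $\Lambda^*$, i.e.\ that $\alpha(ses^*)$ is the appropriate extension of $\alpha(ss^*)$ when $e\leq s^*s$, and it is the crux of the isomorphism rather than a routine consequence of unambiguity. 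As written, the converse is a correct plan but not a proof.
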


The third condition is arguably the most important as it allows one to define the path category of the inverse semigroup.

\begin{defn} Let $S$ be an inverse semigroup that admits unique maximal idempotents. We say that $s \in S$ is a \emph{path} if $s^*s$ is maximal. We denote by $P(S)$ (or sometimes just $P$) the set of paths in $S$.  
\end{defn}

\begin{exmp}
To motivate the previous definition, we show that, for a graph inverse semigroup, the paths of the inverse semigroup correspond to the paths of the original graph.
Letting $\Lambda$ by a directed graph, one can quickly check that 
\[ E(S_\Lambda) = \{ (\alpha, \alpha) : \alpha \in \Lambda^* \} \cup \{0\}, \]
and that $(\alpha, \alpha) \leq (\beta, \beta)$ if and only if $\alpha = \beta \gamma$ for some path $\gamma$. It follows that each nonzero idempotent $(\alpha, \alpha)$ lies under the unique maximal idempotent $(s_\alpha, s_\alpha)$. For $s = (\alpha, \beta)$ in $S_{\Gamma}$, note that $s^*s = (\beta, \beta)$. Thus there is a correspondence between $\Gamma^*$ and the set $P$ of paths in $S_{\Gamma}$ since 
\[
P = \{ (\alpha, s_{\alpha}) : \alpha \in \Gamma^*\}.
\]
\end{exmp}

\begin{defn}
We define a left cancellative category $\C = \C(S)$, which we call the \emph{path category of $S$}, for an inverse semigroup $S$ that admits unique maximal idempotents.
Let
\[
	\C = \C(S) := \left\{ ((ss^*)^{\circ},s) \in E \times S : s \text{ is a path}\right\}.
\]
It is easy to verify that the objects correspond to the maximal idempotents of $S$ and the morphisms correspond to $P(S)$, the paths of $S$. 
Given $\alpha = (e,s)$ in $\C(S)$ we write $s(\alpha) = (s^*s, s^*s)$ and $r(\alpha) = (e,e)$. Then $(e,s)$ and $(f,t)$ in $\C$ are composable when $s^*s = f$, and the product is defined to be 
\[
(e,s)(f,t) = (e, st).
\]
\end{defn}

We omit the routine proof of the following proposition.

\begin{prop} Let $S$ be an inverse semigroup admitting unique maximal idempotents. The path category $\C$ of $S$ is a left cancellative category.
\end{prop}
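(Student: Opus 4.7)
The plan is to verify the three category axioms (composition well-defined, identities, and associativity) and then left cancellation, with the single recurring tool being the standard inverse-semigroup identity that $tt^* \leq s^*s$ forces $s^*st = t$.

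First I would check that the composition $(e,s)(f,t) = (e,st)$ really lies in $\C$ whenever $s^*s = f = (tt^*)^{\circ}$. The composability condition gives $tt^* \leq s^*s$, so the recurring identity yields
\[
(st)^*(st) = t^*s^*st = t^*t,
\]
which is maximal because $t$ is a path, showing that $st$ is itself a path. A symmetric computation puts $stt^*s^* \leq ss^* \leq e$, and uniqueness of the maximal idempotent above $stt^*s^*$ then forces $(stt^*s^*)^{\circ} = e$, so the range of $(e,st)$ is indeed $(e,e)$.

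Next I would check the identity and associativity axioms. For each object $(e,e)$, the morphism $(e,e)$ serves as a two-sided identity: $e^*e = e$ is trivially maximal, and the unit laws reduce to $es = s$ or $se = s$ under the relevant inequalities $ss^* \leq e$ and $s^*s \leq e$, each an instance of the recurring identity. Associativity of composition in $\C$ is inherited directly from associativity of multiplication in $S$, since the first coordinate is uniquely determined by the range of the composite.

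Finally, for left cancellation, suppose $(e,s)(f,t) = (e,s)(f,u)$ with both products defined; then $st = su$ and $tt^*, uu^* \leq s^*s$. Multiplying $st = su$ on the left by $s^*$ gives $s^*st = s^*su$, and applying $s^*st = t$ and $s^*su = u$ yields $t = u$, whence $(f,t) = (f,u)$. No serious obstacle appears here; the only point requiring a little care is that at each step one must trace the composability condition back to an inequality of the form $tt^* \leq s^*s$ that licenses the key identity $s^*st = t$.
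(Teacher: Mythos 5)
Your proof is correct and complete; the paper explicitly omits this proof as routine, and your verification --- reducing each axiom to the standard identity that $tt^* \leq s^*s$ forces $s^*st = t$, together with uniqueness of the maximal idempotent above $(st)(st)^*$ to pin down the first coordinate --- is exactly the intended routine argument. Nothing further is needed.
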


We note that, if $S$ is any inverse semigroup satisfying (P3) and (P4), then $\C$ is singly aligned and $S$ is isomorphic to $\ZM(\C) = S(\C)$. This follows from \cite[Theorem 2.8]{LawsonGraph}. 
We give the following example of an inverse semigroup that satisfies (P3) but not (P4).

\begin{exmp}\label{ex:notgraph} Consider the inverse subsemigroup $S$ of $I_{11}$ generated by the partial bijections
\[
e = \bigl(\begin{smallmatrix}
    7 & 8 & 9 & 10\\
    7 & 8 & 9 & 10
    \end{smallmatrix}\bigr),\; a = \bigl(\begin{smallmatrix}
    1 & 2 & 3\\
    7 & 8 & 9
    \end{smallmatrix}\bigr),\; b = \bigl(\begin{smallmatrix}
    4 & 5 & 6\\
    7 & 8 & 10
    \end{smallmatrix}\bigr), \text{ and } c = \bigl(\begin{smallmatrix}
    11 \\
   7 
    \end{smallmatrix}\bigr).
\]

We give the eggbox diagram of each non-zero $\mcD$-class of $S$. The maximal idempotents have been labelled with the $\symking$ symbol.

\vspace{2mm}
\begin{center}
	\renewcommand{\arraystretch}{1.667}
    \noindent \begin{tabular}{|c|} \hline
           $\bigl(\begin{smallmatrix}
  7 & 8 & 9 & 10\\
  7 & 8 & 9 & 10
\end{smallmatrix}\bigr)^{\symking}$ \\ \hline
    \end{tabular}

\vspace{2mm}

    \begin{tabular}{cc}
    \begin{tabular}{| @{\hspace{1.5mm}}c@{\hspace{1.5mm}} | @{\hspace{1.5mm}}c@{\hspace{1.5mm}} |} \hline
        $\bigl(\begin{smallmatrix}
  7 & 8 & 9 \\
  7 & 8 & 9 
\end{smallmatrix}\bigr)$ & $\bigl(\begin{smallmatrix}
  1 & 2 & 3\\
  7 & 8 & 9
\end{smallmatrix}\bigr)$ \\ \hline
        $\bigl(\begin{smallmatrix}
  7 & 8 & 9\\
  1 & 2 & 3
\end{smallmatrix}\bigr)$ &     $\bigl(\begin{smallmatrix} 
    1 & 2 & 3\\ 
    1 & 2 & 3
    \end{smallmatrix}\bigr)^{\symking}$ \\ \hline
    \end{tabular} 
    
\hspace{2mm}

    \begin{tabular}{| @{\hspace{1.5mm}}c@{\hspace{1.5mm}} | @{\hspace{1.5mm}}c@{\hspace{1.5mm}} |} \hline
        $\bigl(\begin{smallmatrix}
  7 & 8 & 10\\
  7 & 8 & 10 
\end{smallmatrix}\bigr)$ & $\bigl(\begin{smallmatrix}
  4 & 5 & 6 \\
  7 & 8 & 10
\end{smallmatrix}\bigr)$ \\ \hline
        $\bigl(\begin{smallmatrix}
  7 & 8 & 10\\
  4 & 5 & 6
\end{smallmatrix}\bigr)$ &     $\bigl(\begin{smallmatrix} 
    4 & 5 & 6\\ 
    4 & 5 & 6 
    \end{smallmatrix}\bigr)^{\symking}$ \\ \hline
    \end{tabular}
    \end{tabular}

\vspace{2mm}

    \begin{tabular}{| @{\hspace{1.5mm}}c@{\hspace{1.5mm}} | @{\hspace{1.5mm}}c@{\hspace{1.5mm}} | @{\hspace{1.5mm}}c@{\hspace{1.5mm}} |} \hline
            $\bigl(\begin{smallmatrix}
      1 & 2 \\
      1 & 2 
    \end{smallmatrix}\bigr)$ & $\bigl(\begin{smallmatrix}
      4 & 5\\
      1 & 2
    \end{smallmatrix}\bigr)$ &
    $\bigl(\begin{smallmatrix}
      7 & 8\\
      1 & 2
    \end{smallmatrix}\bigr)$ \\ \hline
    $\bigl(\begin{smallmatrix}
      1 & 2\\
      4 & 5
    \end{smallmatrix}\bigr)$ &     $\bigl(\begin{smallmatrix} 
        4 & 5\\ 
        4 & 5
        \end{smallmatrix}\bigr)$ &
    $\bigl(\begin{smallmatrix}
      7 & 8\\
      4 & 5
    \end{smallmatrix}\bigr)$\\ \hline
    $\bigl(\begin{smallmatrix}
      1 & 2\\
      7 & 8
    \end{smallmatrix}\bigr)$ &     $\bigl(\begin{smallmatrix} 
        4 & 5\\ 
        7 & 8
        \end{smallmatrix}\bigr)$ &
    $\bigl(\begin{smallmatrix}
      7 & 8\\
      7 & 8
    \end{smallmatrix}\bigr)$\\ \hline
        \end{tabular}
\vspace{2mm}

    \begin{tabular}{| @{\hspace{1.5mm}}c@{\hspace{1.5mm}} | @{\hspace{1.5mm}}c@{\hspace{1.5mm}} | @{\hspace{1.5mm}}c@{\hspace{1.5mm}} | @{\hspace{1.5mm}}c@{\hspace{1.5mm}} |} \hline
    $\bigl(\begin{smallmatrix}
      1\\1
    \end{smallmatrix}\bigr)$ &     $\bigl(\begin{smallmatrix} 
        11\\1
        \end{smallmatrix}\bigr)$ &
    $\bigl(\begin{smallmatrix}
      7\\1
    \end{smallmatrix}\bigr)$
    &
    $\bigl(\begin{smallmatrix}
      4 \\1 
    \end{smallmatrix}\bigr)$ \\ \hline
    $\bigl(\begin{smallmatrix}
      1\\11
    \end{smallmatrix}\bigr)$ &     $\bigl(\begin{smallmatrix} 
        11\\11
        \end{smallmatrix}\bigr)^{\symking}$ &
    $\bigl(\begin{smallmatrix}
      7\\11
    \end{smallmatrix}\bigr)$
    &
    $\bigl(\begin{smallmatrix}
      4 \\11 
    \end{smallmatrix}\bigr)$ \\ \hline
    $\bigl(\begin{smallmatrix}
      1\\7
    \end{smallmatrix}\bigr)$ &     $\bigl(\begin{smallmatrix} 
        11\\7
        \end{smallmatrix}\bigr)$ &
    $\bigl(\begin{smallmatrix}
      7\\7
    \end{smallmatrix}\bigr)$
    &
    $\bigl(\begin{smallmatrix}
      4\\7
    \end{smallmatrix}\bigr)$ \\ \hline
    $\bigl(\begin{smallmatrix}
      1\\ 4
    \end{smallmatrix}\bigr)$ &     $\bigl(\begin{smallmatrix} 
        11\\ 4
        \end{smallmatrix}\bigr)$ &
    $\bigl(\begin{smallmatrix}
      7\\ 4
    \end{smallmatrix}\bigr)$
    &
    $\bigl(\begin{smallmatrix}
      4 \\ 4 
    \end{smallmatrix}\bigr)$ \\ \hline
        \end{tabular}
\end{center}
\vspace{3mm}

By definition, the set of paths in $S$ is the union of the $\mcL$-classes of maximal idempotents. The non-idempotent paths in this example are: $a, b, c, x,$ and $y$ where $x = \bigl(\begin{smallmatrix}
    11 \\
   1 
\end{smallmatrix}\bigr)$ and $y = \bigl(\begin{smallmatrix}
    11 \\
   4 
\end{smallmatrix}\bigr)$. 

Note that there is one nonzero $\mcD$-class that does not contain a maximal idempotent. Thus $S$ is not a graph inverse semigroup. However, the path category of $S$ is isomorphic to the path category of the following directed graph with identifications $c = ax = by$.
\vspace{3mm}

\begin{center}
\begin{tikzpicture}

\draw [-latex,thick][black] [-] (7,0) arc [radius=5, start angle=60, end angle= 120] node[pos=0.4, inner sep=5.0pt, anchor=north east] {}; 
\node [above] at (3,.5) {$a$};

\draw [-latex,thick][black] [->] (7,0) arc [radius=5, start angle=60, end angle= 105] node[pos=0.4, inner sep=5.0pt, anchor=north east] {};
\node [above] at (6,.5) {$x$};

\draw [-latex,thick][black] [->] (7,0) arc [radius=5, start angle=60, end angle= 75] node[pos=0.4, inner sep=5.0pt, anchor=north east] {};

\draw [-latex,thick][black] [-] (7,0) arc [radius=5, start angle=-60, end angle= -120] node[pos=0.4, inner sep=5.0pt, anchor=north east] {};

\draw [-latex,thick][black] [->] (7,0) arc [radius=5, start angle=-60, end angle= -105] node[pos=0.4, inner sep=5.0pt, anchor=north east] {};
\node [below] at (3,-.5) {$b$};

\draw [-latex,thick][black] [->] (7,0) arc [radius=5, start angle=-60, end angle= -75] node[pos=0.4, inner sep=5.0pt, anchor=north east] {};
\node [below] at (6,-.5) {$y$};

\draw [-latex,thick][black][-] (7, 0) -- (2.0,0); 
\draw [-latex,thick][black][->] (7, 0) -- (4.45,0); 
\node [above] at (4.5,0) {$c$};

\foreach \x in {4.5} \filldraw [black] (\x,0.67) circle (1.5pt);
\foreach \x in {4.5} \filldraw [black] (\x,-.67) circle (1.5pt);
\foreach \x in {2.05} \filldraw [black] (\x,0) circle (1.5pt);
\foreach \x in {6.95} \filldraw [black] (\x,0) circle (1.5pt);

\end{tikzpicture}
\end{center}

The paths $a,b,$ and $c$ share a range object in the diagram because 
\[
(cc^*)^{\circ} = \bigl(\begin{smallmatrix}
    7 \\
    7 
\end{smallmatrix}\bigr)^{\circ} = \bigl(\begin{smallmatrix}
  7 & 8 & 9 & 10\\
  7 & 8 & 9 & 10
\end{smallmatrix}\bigr) = (aa^*)^{\circ} = (bb^*)^{\circ}.
\]
In Example~\ref{ex:notzigzag}, we show that $S$ is not a zigzag inverse semigroup either, as it turns out that it does not satisfy Condition~(Z3) mentioned in the introduction.
\end{exmp}

\section{A Characterization of Zigzag Inverse Semigroups} 

In this section we characterize inverse semigroups that are isomorphic to $\ZM(\C)$ for some LCSC $\C$. We need a few more definitions.

\begin{defn}\label{def:dompath} Let $S$ be an inverse semigroup that admits unique maximal idempotents. Given $s \in S$ we say that a path $x \in P(S)$ is a \emph{domain path of $s$} if $xx^* \leq s^*s$. We denote by $P_s$ the set of domain paths of $s$.
\end{defn}

A semigroup $S$ is said to be \emph{right reductive} if for any $s,t$ in $S$, 
\[
	sx = tx \text{ for all } x \in S \text{ implies that } s = t.
\]
One can quickly prove that inverse semigroups are right reductive. We introduce a more restrictive condition involving domain paths.

\begin{defn} Let $S$ be an inverse semigroup that admits unique maximal idempotents. We say that $S$ is \emph{right reductive on domain paths} if for any $s,t$ in $S$, the conditions:
\[
	P_s = P_t \quad \text{and} \quad sx = tx \;\text{ for all } x \in P_s
\]
imply that $s = t$.
\end{defn}

\begin{rem} The definition of right reductive on domain paths was motivated by work of Cherubini and Petrich on the inverse hull of a right cancellative semigroup \cite{CherubiniPetrich}. 
In Section 6 of \cite{CherubiniPetrich}, they show their inverse hull operation provides one direction of an equivalence between two categories.  
One of the categories is inverse monoids satisfying several conditions, including a condition similar to being right reductive on domain paths,
see Definition 6.2 of \cite{CherubiniPetrich}.
Their proofs are for monoids and do not generalize in a straightforward way to our context, as we have multiple maximal idempotents. 
%
\end{rem}

The next lemma motivates the terminology in Definition \ref{def:dompath}.

\begin{lem}\label{lem:Zpaths} Let $\C$ be a LCSC. Then $\ZM(\C)$ admits unique maximal idempotents, and the paths in $\ZM(\C)$ are precisely the elements of the form $\tau_{\alpha}$ for $\alpha \in \C$. Additionally, for any $\phi_{\zeta}$ in $\ZM(\C)$, 
\[
	P_{\phi_{\zeta}} = \{ \tau_{\alpha} : \alpha \in \dom(\phi_{\zeta})\}.
\]
\end{lem}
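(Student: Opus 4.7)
The plan is to first establish a \emph{right-homogeneity} property for zigzag maps and then read off each conclusion of the lemma. Right-homogeneity says: for any zigzag map $\phi_\zeta$, if $x \in \dom(\phi_\zeta)$ and the product $xy$ is defined in $\C$, then $xy \in \dom(\phi_\zeta)$, the product $\phi_\zeta(x) \cdot y$ is defined in $\C$, and $\phi_\zeta(xy) = \phi_\zeta(x) y$. I would verify this for the generators $\tau_\beta$ and $\tau_\beta^{-1}$ from the formula $\tau_\beta(x) = \beta x$ and then note that it is preserved under composition. As an immediate corollary, $\dom(\phi_\zeta) \subseteq \dom(\tau_{\beta_n}) = s(\beta_n)\C$, where $\beta_n$ is the last entry of the zigzag, so every nonzero element of $\ZM(\C)$ has domain contained in $v\C$ for some object $v \in \C^0$.

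Next I would show that the maximal idempotents of $\ZM(\C)$ are exactly the partial identities $\tau_v$ for $v \in \C^0$, and that each nonzero idempotent lies beneath a unique one. Any nonzero idempotent $e$ is the identity on its domain, which by the corollary is contained in some $v\C$, so $e \leq \tau_v$. For uniqueness, distinct objects $u\neq v$ give $u\C \cap v\C = \emptyset$ (a morphism has only one range), preventing $e$ from sitting under two different $\tau_v$'s. For maximality of $\tau_v$, if $\tau_v \leq f$ for an idempotent $f$, then $v \in v\C \subseteq \dom(f) \subseteq w\C$ forces $r(v) = w$; since $v$ is an identity morphism $r(v) = v$, so $w = v$, $\dom(f) = v\C$, and $f = \tau_v$.

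With that in place, the paths are characterized as follows. An element $\phi_\zeta$ is a path iff $\phi_\zeta^*\phi_\zeta$ is maximal, i.e., iff $\dom(\phi_\zeta) = v\C$ for some object $v$. Every $\tau_\alpha$ trivially qualifies. Conversely, given a path $\phi_\zeta$ with $\dom(\phi_\zeta) = v\C$, set $\alpha := \phi_\zeta(v) \in \C$; applying right-homogeneity to $v \in \dom(\phi_\zeta)$ with the composition $vv = v$ shows that $\alpha \cdot v$ is defined, forcing $s(\alpha) = v$. Then for any $x \in v\C$ we have $x = vx$, and a second application of right-homogeneity gives $\phi_\zeta(x) = \phi_\zeta(v)\cdot x = \alpha x = \tau_\alpha(x)$, so $\phi_\zeta = \tau_\alpha$. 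Finally, since paths are the $\tau_\alpha$'s, $\tau_\alpha \in P_{\phi_\zeta}$ iff $\alpha\C \subseteq \dom(\phi_\zeta)$; one direction is trivial via $\alpha = \alpha \cdot s(\alpha) \in \alpha\C$, the other is right-homogeneity applied to $\alpha \in \dom(\phi_\zeta)$ and arbitrary composable $y$. The main obstacle is the careful verification of right-homogeneity, particularly for $\tau_\beta^{-1}$, where one must show both that $xy \in \dom(\tau_\beta^{-1})$ whenever $x \in \dom(\tau_\beta^{-1})$ and $xy$ is composable, and that the composition on the image side is defined; but this is a short, direct computation from the formulas.
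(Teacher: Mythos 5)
Your argument is correct, and for the central claim (that every path is of the form $\tau_\alpha$) it takes a genuinely different route from the paper. The paper proves that part by induction on the length of the zigzag: if $\phi_\zeta=\tau_{\alpha_1}^{-1}\tau_{\beta_1}\cdots\tau_{\alpha_n}^{-1}\tau_{\beta_n}$ is a path, then $s(\beta_n)\in\dom(\tau_{\alpha_n}^{-1}\tau_{\beta_n})$ forces $\beta_n=\alpha_n\mu_n$, so the last pair collapses to $\tau_{\mu_n}$ and can be absorbed into $\beta_{n-1}$, eventually yielding $\phi_\zeta=\tau_{\mu_1\cdots\mu_n}$. You instead isolate a single structural lemma --- right-equivariance of zigzag maps under right multiplication, checked on the generators $\tau_\beta$, $\tau_\beta^{-1}$ and preserved by composition --- and then obtain the path characterization by evaluating $\phi_\zeta$ at the identity morphism $v$ of its domain object and propagating: $\phi_\zeta(x)=\phi_\zeta(vx)=\phi_\zeta(v)x$. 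This avoids the zigzag-length induction entirely and gives a cleaner uniform treatment, since the same equivariance lemma also handles the closure of $\dom(\phi_\zeta)$ under right multiplication needed for the description of $P_{\phi_\zeta}$ (a fact the paper asserts with only a brief justification). The trade-off is that the paper's induction produces an explicit factorization $\phi_\zeta=\tau_{\mu_1\cdots\mu_n}$ in terms of the zigzag data, and your equivariance lemma is essentially the claim ($\phi_\zeta\tau_\alpha=\tau_{\phi_\zeta(\alpha)}$ for $\alpha\in\dom(\phi_\zeta)$) that the paper ends up proving separately in the following theorem when verifying (Z3), so in the context of the whole paper your approach front-loads that work rather than duplicating it. The remaining parts --- identifying the maximal idempotents as the $\tau_v$ for objects $v$ and using $u\C\cap v\C=\emptyset$ for $u\neq v$ to get uniqueness --- match the paper's argument, with your version spelling out the maximality of $\tau_v$ a bit more carefully.
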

\begin{proof} The maximal idempotents in $\ZM(\C)$ are the idempotents $\tau_{u}$ for an object $u$ in $\C$. Also, if $\tau_{u} \neq \tau_{v}$ then $\tau_u \tau_v = 0$ since $u\C \cap v\C = \emptyset$. If $\phi_{\zeta} = \tau_{\alpha_1}^{-1} \tau_{\beta_1} \cdots \tau_{\alpha_n}^{-1} \tau_{\beta_n}$ is the map associated with a zigzag $\zeta$ and $u$ is an object, then 
\[\phi_{\zeta}^{-1} \phi_{\zeta} \tau_{u} = 
\begin{cases}
		\phi_{\zeta}^{-1} \phi_{\zeta} & \text{ if } \tau_u = \tau_{s(\beta_n)}  \\
		0 & \text{ otherwise}.
	\end{cases}
\]
Therefore each nonzero idempotent of $\ZM(\C)$ lies under a unique maximal idempotent. 
Notice that for each $\alpha \in \C$, $\tau_{\alpha}^{-1} \tau_{\alpha} = \tau_{s(\alpha)}$. Thus $\tau_{\alpha}$ is a path. Suppose that $\phi_{\zeta} = \tau_{\alpha_1}^{-1} \tau_{\beta_1} \cdots \tau_{\alpha_n}^{-1} \tau_{\beta_n}$ is a path. Then $\phi_{\zeta}^{-1} \phi_{\zeta} = \tau_{s(\beta_n)}$ which implies that $s(\beta_n)$ is in the domain of $\tau_{\alpha_n}^{-1} \tau_{\beta_n}$. So $\beta_n \in \alpha_n \C$. Write $\beta_n = \alpha_n \mu_n$ for some $\mu_n$ in $\C$. Thus $\tau_{\alpha_n}^{-1} \tau_{\beta_n} = \tau_{\mu_n}$. Now we may write $\phi_{\zeta}$ as a product of $n-1$ pairs of maps:
\[
\phi_{\zeta} = \tau_{\alpha_1}^{-1} \tau_{\beta_1} \cdots \tau_{\alpha_{n-1}}^{-1} \tau_{\beta_{n-1} \mu_n}.
\]
Repeating the argument we find $\mu_i \in \C$ for $i = 1, \dots, n$ such that 
\[
\phi_{\zeta} = \tau_{\mu_1 \mu_2 \cdots \mu_n}.
\]

Now that we have described the paths, we turn to the set $P_{\phi_{\zeta}}$ of domain paths of a zigzag map $\phi_{\zeta}$. Let $\alpha \in \dom(\phi_{\zeta})$. Note that $\alpha \beta \in \dom(\phi_{\zeta})$ for any $\beta \in s(\alpha)\C$ and
\[
\phi_{\zeta}^{-1}\phi_{\zeta}(\alpha \beta) = \alpha \beta = \tau_{\alpha} \tau_{\alpha}^{-1}(\alpha \beta). 
\]
Since $\dom(\tau_{\alpha} \tau_{\alpha}^{-1}) = \alpha\C$, we have $\tau_{\alpha} \tau_{\alpha}^{-1} \leq \phi_{\zeta}^{-1}\phi_{\zeta}$. Conversely, if $\tau_{\alpha} \tau_{\alpha}^{-1} \leq \phi_{\zeta}^{-1}\phi_{\zeta}$, then $\alpha \in \dom(\phi_{\zeta})$. Therefore we have shown that
\[
P_{\phi_{\zeta}} = \{ \tau_{\alpha} : \alpha \in \dom(\phi_{\zeta})\}.
\]
\end{proof}

The three properties that characterize zigzag inverse semigroups are:
\begin{enumerate}

\item[(Z1)] $S$ admits unique maximal idempotents.

\item[(Z2)] The set of paths $P(S)$ generate $S$.

\item[(Z3)] $S$ is right reductive on domain paths.

\end{enumerate}

We will first show that $\ZM(\C)$ satisfies each of the above properties.

\begin{thm} Let $\C$ be a LCSC. Then $\ZM(\C)$ satisfies (Z1), (Z2), and (Z3).
\end{thm}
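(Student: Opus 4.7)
The plan is to verify the three conditions in order, relying on Lemma~\ref{lem:Zpaths} throughout. For (Z1) I would cite directly the first assertion of Lemma~\ref{lem:Zpaths}, namely that every nonzero idempotent of $\ZM(\C)$ lies beneath a unique maximal idempotent. For (Z2), the generating claim is built into the definition of a zigzag map: an arbitrary nonzero element of $\ZM(\C)$ has the form $\phi_\zeta = \tau_{\alpha_1}^{-1} \tau_{\beta_1} \cdots \tau_{\alpha_n}^{-1} \tau_{\beta_n}$, and Lemma~\ref{lem:Zpaths} identifies each $\tau_{\alpha_i}$ and $\tau_{\beta_i}$ as a path. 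Hence the paths generate $\ZM(\C)$ as an inverse subsemigroup.

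Condition (Z3) is the only step requiring work. Suppose $\varphi, \psi \in \ZM(\C)$ satisfy $P_\varphi = P_\psi$ and $\varphi x = \psi x$ for every $x \in P_\varphi$. By Lemma~\ref{lem:Zpaths}, $P_\varphi = \{\tau_\alpha : \alpha \in \dom(\varphi)\}$ and analogously for $\psi$. Since $\tau_\alpha$ sends the source identity of $\alpha$ to $\alpha$, the assignment $\alpha \mapsto \tau_\alpha$ is injective, and therefore $P_\varphi = P_\psi$ is equivalent to $\dom(\varphi) = \dom(\psi)$. For each $\alpha \in \dom(\varphi)$ I would then evaluate the equality $\varphi \tau_\alpha = \psi \tau_\alpha$ at the source identity of $\alpha$: using that $\dom(\varphi)$ is closed under right-composition in $\C$ (a fact established in the proof of Lemma~\ref{lem:Zpaths}) and that $\tau_\alpha$ carries this source identity to $\alpha$, the evaluation gives $\varphi(\alpha) = \psi(\alpha)$. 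Combined with equality of domains, this yields $\varphi = \psi$ as partial bijections.

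The main obstacle is cosmetic rather than mathematical: the letter $s$ is overloaded both as the source map of $\C$ and as a typical element of the semigroup, so in writing the argument I would reserve $\varphi, \psi$ for elements of $\ZM(\C)$ and either spell out ``source identity of $\alpha$'' throughout or denote it by a fresh symbol to avoid ambiguity. No deeper machinery is required beyond pointwise evaluation at identity morphisms, so the argument should be brief once the notation is stabilized.
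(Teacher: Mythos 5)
Your proof is correct and follows essentially the same route as the paper: (Z1) and (Z2) are read off from Lemma~\ref{lem:Zpaths}, and (Z3) is deduced by translating $P_\varphi = P_\psi$ into $\dom(\varphi) = \dom(\psi)$ and then extracting pointwise agreement from $\varphi\tau_\alpha = \psi\tau_\alpha$. The only difference is minor: the paper first establishes the identity $\phi_\zeta \tau_\alpha = \tau_{\phi_\zeta(\alpha)}$ by an inductive computation through the zigzag factors, whereas you reach the same conclusion more quickly by evaluating both sides at the source identity of $\alpha$, a legitimate shortcut since $\tau_\alpha$ sends that identity to $\alpha \in \dom(\varphi)$.
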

\begin{proof} We proved in Lemma \ref{lem:Zpaths} that $\ZM(\C)$ admits unique maximal idempotents. Also, since $\tau_{\alpha}$ is a path for each $\alpha \in \C$, it follows by definition that $\ZM(\C)$ satisfies (Z2).

Before proving that $\ZM(\C)$ satisfies (Z3) we claim the following: if $\phi_{\zeta} \in \ZM(\C)$ and $\alpha \in \dom(\phi_{\zeta})$, then $\phi_{\zeta} \tau_{\alpha} = \tau_{\phi_{\zeta}(\alpha)}$. Write $\phi_{\zeta} = \tau_{\alpha_1}^{-1} \tau_{\beta_1} \cdots \tau_{\alpha_n}^{-1} \tau_{\beta_n}$. Then $\alpha \in \dom(\tau_{\alpha_n}^{-1}\tau_{\beta_n})$ and $\tau_{\alpha_n}^{-1}\tau_{\beta_n}(\alpha) = \gamma$ for some $\gamma$ such that $\alpha_n \gamma = \beta_n \alpha$. Thus $\dom(\tau_{\alpha_n}^{-1}\tau_{\beta_n}\tau_{\alpha}) = s(\alpha) \C = \dom(\gamma \C)$ and for any $\mu \in s(\alpha) \C,$
\[
	\tau_{\alpha_n}^{-1}\tau_{\beta_n}\tau_{\alpha}(\mu) = \tau_{\alpha_n}^{-1}(\beta_n \alpha)\mu = \tau_{\gamma}(\mu).
\]
Repeating this argument for $\tau_{\alpha_i}^{-1}\tau_{\beta_i}$ for each $i$ we conclude that $\dom(\phi_{\zeta} \tau_{\alpha}) = s(\alpha) \C = \dom(\tau_{\phi_{\zeta}(\alpha)})$ and for any $\mu \in s(\alpha)\C$, $\phi_{\zeta}(\mu) = \tau_{\alpha}(\mu)$. This proves the claim.

Finally, suppose that $\phi_{\zeta_1}$ and $\phi_{\zeta_2}$ are zigzag maps such that $P_{\phi_{\zeta_1}} = P_{\phi_{\zeta_2}}$ and $\phi_{\zeta_1} \tau_{\alpha} = \phi_{\zeta_2} \tau_{\alpha}$ for all $\tau_{\alpha} \in P_{\phi_{\zeta_1}}$. Then, by Lemma \ref{lem:Zpaths}, $\dom(\phi_{\zeta_1}) = \dom(\phi_{\zeta_2})$ and for $\alpha \in \dom(\phi_{\zeta_1})$, $\tau_{\phi_{\zeta_1}(\alpha)} = \tau_{\phi_{\zeta_2}(\alpha)}$. Thus $\phi_{\zeta_1} = \phi_{\zeta_2}$, and $\ZM(\C)$ is right reductive on paths.
\end{proof}

\begin{exmp}\label{ex:notzigzag} We return briefly to Example \ref{ex:notgraph}. One can show that $S$ satisfies (Z1) and (Z2). However, $S$ does not satisfy (Z3) since for $s = b^*a$ and $t = yx^*$ we have
\[
	P_s = \{ \left(\begin{smallmatrix}
    11 \\
    1 
\end{smallmatrix}\right), \left(\begin{smallmatrix}
    1 \\
    1 
\end{smallmatrix}\right)\} = P_t
\]
with $sx = tx$ for each $x \in P_s$, yet $s \neq t$. It follows that $S$ is not a zigzag inverse semigroup. 
At the end of Example~\ref{ex:notgraph}, we observed that the path category of $S$, $\C$, is isomorphic to the path category of a directed graph with certain identifications. There is a natural relationship between $\ZM(\C)$ and the original inverse semigroup $S$, which we present in the next remark.
\end{exmp}

\begin{rem}
For any inverse semigroup $T$ with $0$ satisfying (Z1) and (Z2), there is a congruence on $T$ given by identifying $s$ with $t$ if $P_s = P_t$ and $sx = tx$ for all $x \in P_s$. 
One can then show that $\ZM(\C(T))$ is the quotient of $T$ by this congruence.
\end{rem}

To prove our main theorem, we will show that if $S$ satisfies (Z1), (Z2), and (Z3), then $S$ is isomorphic to  $\ZM(\C)$ where $\C$ is the path category of $S$. 
To simplify the notation, for a path $a$ in $S$ we let $t(a) = ((aa^*)^{\circ},a)$ in $\C$ and we let $\tau_{a}$ denote the map $\tau_{t(a)}$ in $\ZM(\C)$.

\begin{lem}\label{lem:Zintersect} Let $S$ be an inverse semigroup satisfying (Z1) and let $\C$ be the path category of $S$. For paths $a$ and $b$ in $S$ we have
\[
	t(a)\C \cap t(b)\C = \{t(z) : z^*z \text{ is maximal and } zz^* \leq aa^*bb^* \}.
\]
Also, 
\[
\dom(\tau_{a}^{-1} \tau_{b}) = \{t(x): x = b^*z, z^*z \text{ is maximal, and } zz^* \leq aa^*bb^* \}.
\]
For such $x$, $\tau_{a}^{-1} \tau_{b}(t(x)) = t(a^*bx)$.
\end{lem}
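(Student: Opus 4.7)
The plan is to reduce everything to one characterization: for a path $a$, a morphism $t(z)$ lies in $t(a)\C$ if and only if $z$ is a path with $zz^* \leq aa^*$. The intersection formula follows by applying this to both $a$ and $b$, since the conjunction of $zz^* \leq aa^*$ and $zz^* \leq bb^*$ is equivalent to $zz^* \leq aa^*bb^*$.

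For the characterization, I would first unpack composition in $\C$. Any morphism in $s(t(a))\C$ has range $(a^*a, a^*a)$, so is $t(c)$ for a path $c$ with $(cc^*)^\circ = a^*a$; this gives $cc^* \leq a^*a$ and hence $a^*ac = c$. A direct calculation then shows $t(a) \cdot t(c) = t(ac)$, using (Z1) to identify $(acc^*a^*)^\circ = (aa^*)^\circ$ from $acc^*a^* \leq aa^*$. Conversely, given a path $z$ with $zz^* \leq aa^*$, I would set $c = a^*z$: the inequality forces $aa^*z = z$, so $ac = z$; then $c^*c = z^*aa^*z = z^*z$ is maximal, and $cc^* = a^*zz^*a \leq a^*a$ identifies $(cc^*)^\circ = a^*a$ via (Z1). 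Hence $t(c) \in s(t(a))\C$ and $t(a) \cdot t(c) = t(z)$, as required.

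For the remaining assertions, $\dom(\tau_a^{-1}\tau_b) = \tau_b^{-1}(t(a)\C \cap t(b)\C)$. A morphism $t(x) \in s(t(b))\C$ satisfies $b^*bx = x$, so $\tau_b(t(x)) = t(bx)$ by the same composition calculation. Setting $z = bx$, the first equation describes exactly when $t(z)$ lies in the intersection, and $b^*bx = x$ recovers $x = b^*z$. Finally $zz^* \leq aa^*$ forces $aa^*z = z$, giving $\tau_a^{-1}(t(z)) = t(a^*z) = t(a^*bx)$. The main hurdle is organizational rather than substantive: one must repeatedly invoke (Z1) to collapse first-coordinate data of the form $(e)^{\circ} = f$ whenever $e \leq f$ with $f$ maximal, and carefully verify at each step that a proposed element such as $a^*z$ or $bx$ really is a path with the correct source and range in $\C$ before composing, so that the interplay between the category $\C$ and the semigroup $S$ stays coherent.
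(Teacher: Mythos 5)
Your proposal is correct and follows essentially the same route as the paper: both directions of the membership criterion are proved by the same computations (composability forcing $zz^* \leq aa^*$, and conversely setting $c = a^*z$ and checking $c^*c = z^*z$ is maximal with $cc^* \leq a^*a$), and the domain and image formulas are obtained identically via $z = bx$, $x = b^*z$. The only difference is organizational --- you isolate the single-path characterization of $t(a)\C$ before intersecting, while the paper treats the intersection directly --- which does not change the substance of the argument.
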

\begin{proof}
For the first claim, suppose that $t(z) \in t(a)\C \cap t(b)\C$. Then $z^*z$ is maximal since $z$ is a path and $z = ax = by$ for some paths $x$ and $y$. 
Since $t(a)$ and $t(x)$ are composable, we have $xx^* \leq a^*a$ and hence $zz^* = axx^*a^* \leq aa^*$. Similarly $zz^* \leq bb^*$ and so $zz^* \leq aa^*bb^*$. Next suppose that $z$ is a path with $zz^* \leq aa^*bb^*$. Let $x = a^*z$ and $y = b^*z$. Note that $x^*x = z^* aa^* z = z^*z$ and $xx^* \leq a^*a$. Thus $t(x) \in t(a)\C$ and $t(z) = t(a)t(x)$. Similarly $t(z) = t(b)t(y)$.

Next, notice that $\dom(\tau_{a}^{-1} \tau_{b})$ consists of $t(x) \in t(b^*b)\C$ such that $t(bx) \in t(a)\C \cap t(b)\C$. So for $x$ in $\dom(\tau_{a}^{-1} \tau_{b})$ and $z = bx$ we have $z^*z = x^*b^*bx = x^* x$ is maximal and $zz^* \leq aa^*bb^*$. Conversely, if $x = b^*z$ where $z^*z$ is maximal and $zz^* \leq a^*ab^*b$ then $t(bx) = t(z) \in t(a)\C \cap t(b)\C$. Finally, we have
\[
\tau_{a}^{-1} \tau_{b}(t(x)) = \tau_{a}^{-1}(t(bb^*z)) = \tau_{a}^{-1}(t(aa^*bb^*z)) = t(a^*bb^*z) = t(a^*bx).
\]
\end{proof}

\begin{lem}\label{lem:Zdomain} Let $S$ be an inverse semigroup satisfying (Z1) and let $\C$ be the path category of $S$. If $\phi = \tau_{a_1}^{-1} \tau_{b_1} \cdots \tau_{a_n}^{-1} \tau_{b_n} \in \ZM(\C)$, then 
\[
P_{a_1^* b_1 \cdots a_n^* b_n} = \{x \in P(S) : t(x) \in \dom(\phi)\}.
\]
\end{lem}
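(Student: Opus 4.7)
My plan is to prove this by induction on $n$, with $s := a_1^* b_1 \cdots a_n^* b_n$. For the base case $n = 1$, Lemma~\ref{lem:Zintersect} says $t(x) \in \dom(\phi)$ iff $x = b_1^* z$ for some path $z$ with $z^*z$ maximal and $zz^* \le a_1 a_1^* b_1 b_1^*$, and I would translate this to the condition $xx^* \le b_1^* a_1 a_1^* b_1 = s^*s$. One direction is immediate: if $x = b_1^* z$ as above, then $xx^* = b_1^* zz^* b_1 \le b_1^* a_1 a_1^* b_1 b_1^* b_1 = b_1^* a_1 a_1^* b_1$. For the reverse, given $xx^* \le b_1^* a_1 a_1^* b_1 \le b_1^* b_1$, commutativity of idempotents yields $b_1^* b_1 x = x$, so $z := b_1 x$ satisfies $z^*z = x^*x$ (maximal), $b_1^* z = x$, and $zz^* = b_1 xx^* b_1^* \le a_1 a_1^* b_1 b_1^*$.

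For the inductive step, I would factor $\phi = \tau_{a_1}^{-1} \tau_{b_1} \psi$, where $\psi = \tau_{a_2}^{-1} \tau_{b_2} \cdots \tau_{a_n}^{-1} \tau_{b_n}$ corresponds to $t := a_2^* b_2 \cdots a_n^* b_n$, so $s = a_1^* b_1 t$. Alongside the main statement I would carry an auxiliary claim: $\psi(t(x)) = t(tx)$ whenever $t(x) \in \dom(\psi)$. This follows by the same induction using the formula $\tau_a^{-1}\tau_b(t(x)) = t(a^* b x)$ from Lemma~\ref{lem:Zintersect}. Granted this, $t(x) \in \dom(\phi)$ iff $t(x) \in \dom(\psi)$ and $t(tx) = \psi(t(x)) \in \dom(\tau_{a_1}^{-1}\tau_{b_1})$, which by the induction hypothesis together with the base case is equivalent to
\[
xx^* \le t^*t \quad\text{and}\quad txx^*t^* \le b_1^* a_1 a_1^* b_1.
\]

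The crux of the step is to show that this pair of conditions is equivalent to $xx^* \le s^*s = t^* b_1^* a_1 a_1^* b_1 t$. Using $tt^*t = t$, one verifies $t^* b_1^* a_1 a_1^* b_1 t \le t^*t$, so the first condition follows from $xx^* \le s^*s$; and sandwiching and invoking that idempotents commute gives $txx^*t^* \le (tt^*) b_1^* a_1 a_1^* b_1 (tt^*) \le b_1^* a_1 a_1^* b_1$. For the converse, I would use the identity $t^*(txx^*t^*)t = xx^*$, which holds since $xx^* \le t^*t$ implies $t^*t \cdot xx^* \cdot t^*t = xx^*$; combining with $txx^*t^* \le b_1^* a_1 a_1^* b_1$ yields $xx^* \le t^* b_1^* a_1 a_1^* b_1 t$.

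The main obstacle will be coordinating the auxiliary claim about the value of $\psi(t(x))$ with the main inductive claim so that each can lean on the other cleanly. The idempotent manipulations are routine consequences of commutativity of $E(S)$ and the identity $tt^*t = t$, but keeping track of where each step lives (in $S$ versus in $\dom(\phi) \subseteq \C$) is where errors are most likely to creep in.
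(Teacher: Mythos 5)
Your argument is correct, and it rests on the same two ingredients as the paper's proof — the description of $\dom(\tau_a^{-1}\tau_b)$ and the evaluation formula from Lemma~\ref{lem:Zintersect}, together with routine idempotent calculus — but the organization is genuinely different. The paper unrolls the whole composition at once: it describes $\dom(\phi)$ explicitly as the set of $t(b_n^* z_n)$ for a chain of paths $z_n, z_{n-1}, \dots, z_1$ with $z_k = b_k a_{k+1}^* z_{k+1}$ and $z_k z_k^* \leq a_k a_k^* b_k b_k^*$, and then proves both inclusions by telescoping computations (the converse direction needs an auxiliary idempotent $e_k$ to push the inequality through each stage). You instead induct on $n$, peeling off the leftmost factor $\tau_{a_1}^{-1}\tau_{b_1}$, and reduce the inductive step to the clean equivalence: for an idempotent $e$ and the tail $t = a_2^* b_2 \cdots a_n^* b_n$, one has $xx^* \leq t^* e t$ if and only if $xx^* \leq t^* t$ and $t xx^* t^* \leq e$. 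That equivalence is exactly the content of the paper's telescoping chains, isolated as a two-line fact, and your verifications of it (via $tt^*t=t$, conjugation preserving the order on idempotents, and $t^*t\, xx^*\, t^*t = xx^*$ when $xx^*\leq t^*t$) are sound. The price is that you must carry the auxiliary claim $\psi(t(x)) = t(tx)$ through the induction; the paper does not need this inside Lemma~\ref{lem:Zdomain} (it invokes the analogous identity only later, in the proof of Theorem~\ref{thm:main}), but it follows at once from the last assertion of Lemma~\ref{lem:Zintersect}, so this costs nothing. On balance your version is easier to audit, while the paper's version yields the explicit parametrization of $\dom(\phi)$ as a byproduct; the only caution is the notational collision between the tail $t$ and the map $t(\cdot)$, which you should rename before writing this up.
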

\begin{proof}
Since $\phi \in \ZM(\C)$, $s(t(b_i)) = s(t(a_{i+1}))$ for $i = 1, \dots, n-1$. Using the previous lemma, the domain of $\phi$ consists of $t(b_n^*z_n)$ such that $z_n$ is a path, $z_n z_n^* \leq a_n a_n^* b_n b_n^*,$ and 
\[
a_n^* z_n = \tau_{a_n}^{-1} \tau_{b_n} (b_n^* z_n) \in \dom(\tau_{a_1}^{-1} \tau_{b_1} \cdots \tau_{a_{n-1}}^{-1} \tau_{b_{n-1}}).
\] 
Then $a_n^* z_n = b_{n-1}^* z_{n-1}$ for some $z_{n-1}$ where $z_{n-1} z_{n-1}^* \leq a_{n-1} a_{n-1}^* b_{n-1} b_{n-1}^*$. Notice that $z_{n-1} = b_{n-1} a_n^* z_n$ and, since $s(t(b_{n-1})) = s(t(a_n))$, we see that
\begin{align*}
z_{n-1}^* z_{n-1} 	&= z_n^* a_n b_{n-1}^* b_{n-1} a_n^* z_n \\
					&= z_n^* a_n a_n^* z_n \\
					&= (a_n^* z_n)^* (a_n^* z_n).
\end{align*}
Thus $z_{n-1}$ is automatically a path. Continuing in this way, we can show inductively that the domain of $\phi$ consists of $t(b_n^*z_n)$ where $z_n \in P(S)$, $z_{n} z_{n}^* \leq a_{n} a_{n}^* b_{n} b_{n}^*$, and for $z_{k} = b_{k} a_{k+1}^* z_{k+1},\; z_{k} z_{k}^* \leq a_{k} a_{k}^* b_{k} b_{k}^*$ for $k = n-1, \dots, 1$.

We are now prepared to prove the lemma. Let $s_k = a_1^* b_1 \cdots a_k^* b_k$ for each $k = 1, \dots, n$. Suppose that $t(x) \in \dom(\phi)$. Choose $z_k$ for $k = n, \dots, 1$ as above. Then
\begin{align*}
 xx^* 	&= b_n^* z_n z_n^* b_n \\
 		&= b_n^* a_n (a_n^* z_n z_n^* a_n) a_n^* b_n \\
		&= b_n^* a_n (b_{n-1}^* z_{n-1} z_{n-1}^* b_{n-1}) a_n^* b_n \\
		&\;\;\vdots\\
		&= s_n^* z_1 z_1^* s_n \\
		&\leq s_n^* s_n.
\end{align*}  
Thus $x \in P_{s_n}$.
Conversely, suppose that $x$ is a domain path for $s_n$. Then 
\[
xx^* \leq  s_n^* s_n \leq b_n^* b_n
\]
and $x = b_n^* z_n$ where $z_n = b_n x$. Then $z_n \in P(S)$ and 
\begin{align*}
z_n z_n^* 	&= b_n x x^* b_n^* \\
			&\leq b_n s_n^* s_n b_n^* \\
			&= b_n b_n^* a_n s_{n-1}^* s_{n-1} a_n^* b_n b_n^* \\
			&\leq a_n a_n^* b_n b_n^*
\end{align*} 
For each $k$, let $z_k = b_k a_{k+1}^* z_{k+1}$. Then we have
\begin{samepage}
\begin{align*}
z_k z_k^* 	&= b_k a_{k+1}^* (z_{k+1} z_{k+1}^*) a_{k+1} b_k^* \\
		 	&= b_k a_{k+1}^* b_{k+1} a_{k+2}^* (z_{k+2} z_{k+2}^*) a_{k+2} b_{k+1}^* a_{k+1} b_k^* \\
			&\;\;\vdots\\
			&= b_k a_{k+1}^* \cdots b_{n-1} a_n^* (z_n z_n^*) a_n b_{n-1}^* \cdots a_{k+1} b_k^* \\
			&\leq b_k a_{k+1}^* \cdots b_{n-1} a_n^* (b_n s_n^* s_n b_n^*) a_n b_{n-1}^* \cdots a_{k+1} b_k^* \\
\intertext{and, letting $e_k = b_{k+1} a_{k+2}^* \cdots b_{n-1}a_{n}^*b_n b_n^* a_n b_{n-1}^* \cdots a_{k+2} b_{k+1}^*$, we rewrite the last line as}
			&= b_k (a_{k+1}^* e_k a_{k+1}) (s_k^* s_k)(a_{k+1}^* e_k a_{k+1}) b_k^* \\
			&\leq b_k (s_k^* s_k) b_k^*\\
			&\leq a_k a_k^* b_k b_k^*
\end{align*}
\end{samepage}
Therefore $t(x) \in \dom(\phi)$.
\end{proof}

\begin{prop}\label{prop:Szigzag} Let $S$ be an inverse semigroup satisfying (Z1). Then the set $P^0$ of paths in $S$ together with $0$ is a subsemigroup of $S$. Moreover, every nonzero element in the inverse semigroup generated by $P$ can be written in the form $a_1^* b_1 a_2^* b_2 \cdots a_n^* b_n$ where $(a_i a_i^*)^{\circ} = (b_i b_i^*)^{\circ}$ for $i = 1, \dots, n$ and $b_i^* b_i  = a_{i+1}^* a_{i+1}$ for $i = 1, \dots, n-1$.
\end{prop}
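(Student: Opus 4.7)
The plan is to handle the two assertions in turn, with the first feeding into the second.

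For the closure statement, suppose $a,b \in P$ with $ab \neq 0$. Then $a^*a \cdot bb^* \neq 0$, since otherwise $ab = a(a^*a \cdot bb^*)b$ would vanish. Now $a^*a$ is a maximal idempotent (as $a$ is a path), and $(bb^*)^\circ$ is the maximal idempotent above $bb^*$; both lie above the nonzero idempotent $a^*a \cdot bb^*$, so (Z1) forces $a^*a = (bb^*)^\circ \geq bb^*$. Hence $a^*a \cdot b = b$, which gives $(ab)^*(ab) = b^*(a^*a)b = b^*b$, a maximal idempotent. Therefore $ab \in P$, and $P^0$ is closed under multiplication.

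For the second statement, any nonzero element of the inverse semigroup generated by $P$ is a product $x_1 \cdots x_m$ with each $x_i \in P \cup P^*$. I would first consolidate consecutive same-type factors: two consecutive paths fuse to a single path by the first statement, while two consecutive inverses rewrite as $p^*q^* = (qp)^*$, where $qp \in P$ (it is nonzero because the full word is). The resulting word alternates between $P^*$ and $P$. To force the precise pattern $a_1^* b_1 \cdots a_n^* b_n$, pad with maximal idempotents at the endpoints, using that any maximal idempotent $e$ is a self-adjoint element of $P$. If the alternating word begins with a path $q$, prepending $u^*$ with $u = (qq^*)^\circ$ yields $u^* q = u q = q$ (since $u \geq qq^*$), so the value is unchanged while the word now begins with an element of $P^*$. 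Dually, if the word ends in some $p^*$, appending $(pp^*)^\circ$ leaves the value fixed via $p^* \cdot (pp^*)^\circ = p^*$, and makes the word end in $P$.

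Finally, both compatibility conditions come for free once the product is nonzero. For (a), $a_i^* b_i \neq 0$ implies $a_i a_i^* \cdot b_i b_i^* \neq 0$, and (Z1) then identifies the unique maximal idempotent above this nonzero idempotent with both $(a_i a_i^*)^\circ$ and $(b_i b_i^*)^\circ$. For (b), distinct maximal idempotents necessarily multiply to zero (otherwise their product would admit two distinct maximal idempotents above it, contradicting (Z1)), so the nonvanishing of $b_i a_{i+1}^*$ inside the word forces $b_i^* b_i = a_{i+1}^* a_{i+1}$. I expect the only place genuinely needing care to be the padding step, where one must check that the prepended and appended maximal idempotents do not disturb the interior of the word; but the identities $ex = x$ when $e \geq xx^*$ and the dual $se = s$ when $e \geq s^*s$ take care of this automatically.
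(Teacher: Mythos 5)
Your proof is correct and follows essentially the same route as the paper: closure of $P^0$ via (Z1) applied to the nonzero idempotent $a^*a\,bb^*$, then consolidation of a word over $P \cup P^*$ into an alternating product, padding at the ends with maximal idempotents, and observing that the two compatibility conditions are forced by nonvanishing. You have simply made explicit the steps the paper compresses into ``after left or right multiplying by the correct maximal idempotent.''
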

\begin{proof}
Let $a$ and $b$ be paths in $S$. By (Z1), $a^*abb^*$ lies under a unique maximal idempotent if it is nonzero. Thus 
\[
	a^*abb^* = \begin{cases}
			 	a^*a & \text{ if } (bb^*)^{\circ} = a^*a \\
				0 & \text{ otherwise}.
		 \end{cases}
\]

We conclude that: $ab \neq 0 \Leftrightarrow a^*abb^* \neq 0 \Leftrightarrow bb^* \leq a^*a$. So $ab \neq 0$ if and only if $ab$ is a path. Thus $P^0$ is a semigroup. Similarly, the product $(ab)^*$ is nonzero exactly when it is the inverse of a path. Thus any nonzero product in the inverse semigroup generated by $P$ can be reduced to one that alternates between paths and inverses of paths. After left or right multiplying by the correct maximal idempotent, one can assume the alternating product has the form $a_1^* b_1 a_2^* b_2 \cdots a_n^* b_n$. One can also assume $(a_i a_i^*)^{\circ} = (b_i b_i^*)^{\circ}$ for $i = 1, \dots, n$ and $b_i^* b_i  = a_{i+1}^* a_{i+1}$ for $i = 1, \dots, n-1$, because otherwise the product will be $0$. 
\end{proof}

\begin{thm}\label{thm:main} The zigzag inverse semigroups are precisely the inverse semigroups with $0$ satisfying (Z1), (Z2), and (Z3).
\end{thm}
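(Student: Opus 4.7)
One direction is already proved in the preceding theorem, so the task is to show that any inverse semigroup $S$ with $0$ satisfying (Z1), (Z2), and (Z3) is isomorphic to $\ZM(\C)$, where $\C = \C(S)$ is the path category of $S$. The plan is to construct an explicit isomorphism $\Phi : S \to \ZM(\C)$ using the standard form from Proposition~\ref{prop:Szigzag} and verify it is a well-defined, homomorphic, bijection via the two technical Lemmas~\ref{lem:Zintersect} and \ref{lem:Zdomain}.

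First, by (Z2) and Proposition~\ref{prop:Szigzag}, every nonzero $s \in S$ admits a factorization $s = a_1^* b_1 \cdots a_n^* b_n$ whose compatibility conditions are exactly the conditions required for $\zeta_s := (t(a_1), t(b_1), \dots, t(a_n), t(b_n))$ to be a zigzag in $\C$. Define $\Phi(0) = 0$ and $\Phi(s) = \phi_{\zeta_s} = \tau_{a_1}^{-1} \tau_{b_1} \cdots \tau_{a_n}^{-1} \tau_{b_n}$ on any chosen factorization. To show $\Phi$ is well-defined, observe that Lemma~\ref{lem:Zdomain} says the domain of $\phi_{\zeta_s}$ in $\ZM(\C)$ is precisely $\{t(x) : x \in P_s\}$, which depends only on $s$; and by iterating Lemma~\ref{lem:Zintersect}, $\phi_{\zeta_s}(t(x)) = t(sx)$ for every $x \in P_s$, again a quantity depending only on $s$. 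Thus any two factorizations of $s$ yield zigzag maps with identical domains and identical values, so $\Phi(s)$ is unambiguously defined.

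Once well-definedness is secured, the homomorphism property is easy: concatenating factorizations of $s$ and $t$ gives a (possibly reducible) representative for $st$, and on the common domain one verifies $\Phi(s)\Phi(t)(t(x)) = t(stx) = \Phi(st)(t(x))$ via the same lemmas. For injectivity, suppose $\Phi(s) = \Phi(t) \neq 0$. Lemma~\ref{lem:Zdomain} yields $P_s = P_t$. For every $x \in P_s$, evaluating both sides on $t(x)$ gives $t(sx) = t(tx)$, so $sx = tx$ as elements of $S$ (since they agree as the second coordinate of $t$). Now (Z3) forces $s = t$. For surjectivity, an arbitrary element of $\ZM(\C)$ has the form $\phi_\zeta$ for some zigzag $\zeta = (\alpha_1, \beta_1, \dots, \alpha_n, \beta_n)$ in $\C$. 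Since the morphisms of $\C$ correspond bijectively to paths in $P(S)$, we may lift $\zeta$ to paths $a_i, b_i \in P(S)$ with $\alpha_i = t(a_i)$ and $\beta_i = t(b_i)$. The zigzag compatibility conditions on $\zeta$ become $(a_i a_i^*)^\circ = (b_i b_i^*)^\circ$ and $b_i^* b_i = a_{i+1}^* a_{i+1}$, so by Proposition~\ref{prop:Szigzag} the product $s = a_1^* b_1 \cdots a_n^* b_n$ is nonzero in $S$ and satisfies $\Phi(s) = \phi_\zeta$.

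The main obstacle is the well-definedness of $\Phi$: a priori a single element of $S$ has many zigzag factorizations and it is not obvious they all produce the same partial bijection on $\C$. The whole proof pivots on recognizing that Lemmas~\ref{lem:Zintersect} and \ref{lem:Zdomain} express both the domain and the output of the candidate zigzag map purely in terms of $s$ itself (namely $P_s$ and $sx$), so the factorization freedom is absorbed. After this, (Z3) is precisely the intrinsic condition on $S$ needed to turn the agreement $t(sx) = t(tx)$ for $x \in P_s$ into the equality $s = t$, closing the argument.
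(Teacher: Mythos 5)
Your proposal is correct and follows essentially the same route as the paper's own proof: well-definedness of the map $s \mapsto \tau_{a_1}^{-1}\tau_{b_1}\cdots\tau_{a_n}^{-1}\tau_{b_n}$ via Lemmas~\ref{lem:Zintersect} and~\ref{lem:Zdomain}, injectivity from (Z3), and surjectivity from Proposition~\ref{prop:Szigzag} together with the definition of $\ZM(\C)$. Your added observation that the zigzag compatibility conditions on $\C$ translate exactly into the conditions of Proposition~\ref{prop:Szigzag} is a nice explicit justification of the surjectivity step that the paper leaves implicit.
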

\begin{proof} We have seen that any semigroup of the form $\ZM(\C)$ satisfies (Z1), (Z2), and (Z3). To complete the proof assume that $S$ is an inverse semigroup with $0$ satisfying (Z1), (Z2), and (Z3). Let $s \in S$. Suppose that there are two representations of $s$ as in Proposition \ref{prop:Szigzag}. Say $s = a_1^* b_1 \cdots a_n^* b_n = c_1^* d_1 \cdots c_m^* d_m$. Then 
\[ 
    P_{a_1^* b_1 \cdots a_n^* b_n} = \{x \in P(S) : xx^* \leq s^*s \} = P_{c_1^* d_1 \cdots c_n^* d_n}. 
\]
By Lemma \ref{lem:Zdomain}, the domain of $\tau_{a_1}^{-1}\tau_{b_1} \cdots \tau_{a_n}^{-1} \tau_{b_n}$ is equal to the domain of $\tau_{c_1}^{-1}\tau_{d_1} \cdots \tau_{c_n}^{-1} \tau_{d_n}$. It follows from the final assertion of Lemma \ref{lem:Zintersect} that for $t(x) \in \dom(\tau_{a_1}^{-1}\tau_{b_1} \cdots \tau_{a_n}^{-1} \tau_{b_n})$, 
\begin{align*}
\tau_{a_1}^{-1}\tau_{b_1} \cdots \tau_{a_n}^{-1} \tau_{b_n}(t(x)) &= t(a_1^* b_1 \cdots a_n^* b_n x) \\
 &= t(c_1^* d_1 \cdots c_m^* d_m x) \\
 &= \tau_{c_1}^{-1}\tau_{d_1} \cdots \tau_{c_n}^{-1} \tau_{d_n}(t(x)). \\
\end{align*}
Therefore $\tau_{a_1}^{-1}\tau_{b_1} \cdots \tau_{a_n}^{-1} \tau_{b_n} = \tau_{c_1}^{-1}\tau_{d_1} \cdots \tau_{c_n}^{-1} \tau_{d_n}$.
Thus there is a well-defined map $\theta : S \to \ZM(\C)$ given by 
\[
\theta(s) = \tau_{a_1}^{-1}\tau_{b_1} \cdots \tau_{a_n}^{-1} \tau_{b_n}.
\]
It is easily verified that $\theta$ is a homomorphism. Moreover, it follows from (Z3) that $\theta$ is injective. Finally, it follows from the definition of $\ZM(\C)$ and Proposition \ref{prop:Szigzag} that $\theta$ is surjective.
\end{proof}

\section{Morita Equivalence}

In this section we will give one application of the characterization of zigzag inverse semigroups: every inverse semigroup $S$ is Morita equivalent to a zigzag inverse semigroup. We use the construction in \cite{AfaraLawson} of an inverse semigroup $IM(S,I,p)$ from a set $I$ and a McAlister function $p: I \times I \to S$.

Let $S$ be an inverse semigroup with $0$ and define $p : E \times E \to S$ by
\[
p(e,f) = \begin{cases}
		e & \text{ if } e = f \\
		0 & \text{ otherwise}.
	\end{cases}
\]

One can quickly verify that $p$ is a McAlister function. It follows from \cite[Lemma 2.3]{AfaraLawson} that $RM(S,E,p)$ consists of triples $(e,s,f)$ such that $ss^* \leq e$ and $s^*s \leq f$. One can then use \cite[Lemma 2.6]{AfaraLawson} to see that $IM(S,E,p)$ can be identified as the inverse semigroup:
\[
IM(S,E,p) = \{ (e,s,f) : ss^* \leq e, s^*s \leq f, \text{ and } s \neq 0 \} \cup \{0\}
\]
with inversion given by  $(e,s,f) = (f,s^*,e)$ and multiplication 
\[
(e,s,f)(e',t,f') =  \begin{cases}
		(e, st, f') & \text{ if } f = e' \text{ and } st \neq 0 \\
		0 & \text{ otherwise}.
	\end{cases}
\]

\begin{thm} Let $S$ be an inverse semigroup and let $IM(S,E,p)$ be defined as above. Then $IM(S,E,p)$ is a zigzag inverse semigroup.
\end{thm}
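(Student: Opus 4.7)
The plan is to apply Theorem~\ref{thm:main}, so I will verify that $T := IM(S,E,p)$ satisfies axioms (Z1), (Z2), and (Z3).

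First, I describe the idempotents and the natural partial order on $T$. A direct calculation with the given multiplication formula shows
\[ E(T) = \{(e,t,e) : e \in E,\ t \in E(S),\ t \leq e\} \cup \{0\}, \]
and that $(e,t,e) \leq (e',t',e')$ holds if and only if $e = e'$ and $t \leq t'$ (computed from $(e,t,e) = (e',t',e')(f,r,f)(e,t,e)$ for a suitable idempotent of $T$). Hence every nonzero idempotent $(e,t,e)$ sits below the unique maximal idempotent $(e,e,e)$, giving (Z1).

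Next, for (Z2), note that $(e,s,f)^*(e,s,f) = (f, s^*s, f)$, and this is maximal in $E(T)$ exactly when $s^*s = f$, so the set of paths is
\[ P(T) = \{(e,s,s^*s) : s \in S,\ ss^* \leq e\} \cup \{0\}. \]
Any nonzero element then factors as
\[ (e,s,f) = (e,s,s^*s)\,(s^*s, s^*s, f) = (e,s,s^*s) \cdot (f, s^*s, s^*s)^*, \]
a product of a path and the inverse of a path, so the paths generate $T$.

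The main work is (Z3). For a nonzero $s = (e,\sigma,f) \in T$, a path $u = (g,\rho,\rho^*\rho)$ satisfies $uu^* = (g, \rho\rho^*, g) \leq (f, \sigma^*\sigma, f) = s^*s$ if and only if $g = f$ and $\rho\rho^* \leq \sigma^*\sigma$; hence
\[ P_s = \{(f, \rho, \rho^*\rho) : \rho \in S,\ \rho\rho^* \leq \sigma^*\sigma\}. \]
Observe that $P_s$ always contains the ``diagonal'' element $u := (f, \sigma^*\sigma, \sigma^*\sigma)$, so $P_s \neq \emptyset$. Given a second nonzero $t = (e', \tau, f') \in T$ with $P_s = P_t$, comparing first coordinates across this equality forces $f = f'$, and the membership of $u$ in $P_t$ together with the symmetric reasoning gives $\sigma^*\sigma = \tau^*\tau$. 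Evaluating both sides at $u$ yields
\[ s u = (e,\sigma,\sigma^*\sigma), \qquad t u = (e', \tau, \sigma^*\sigma), \]
so the hypothesis $su = tu$ produces $e = e'$ and $\sigma = \tau$, hence $s = t$. The remaining case $P_s = P_t = \emptyset$ forces $s = t = 0$, since $P_s \neq \emptyset$ whenever $s \neq 0$. The main obstacle is disciplined bookkeeping of the three coordinates through the various products; conceptually, everything rides on the observation that the single element $u$ already encodes enough information about $s$ to recover all three coordinates.
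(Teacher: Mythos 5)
Your proof is correct and follows essentially the same route as the paper's: the same description of the idempotents and the order (giving (Z1) via the maximal idempotent $(e,e,e)$), the same factorization $(e,s,f)=(e,s,s^*s)(f,s^*s,s^*s)^*$ for (Z2), and for (Z3) the same key test element $(f,\sigma^*\sigma,\sigma^*\sigma)\in P_s$ recovering all three coordinates. The only cosmetic point is that your description of $E(T)$ should exclude $t=0$; otherwise the argument matches the paper's.
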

\begin{proof}
We verify the conditions (Z1), (Z2), and (Z3).  The non-zero idempotents of $IM(S,E,p)$ are of the form $(e,x,e)$ where $e$ is idempotent in $S$ and $0 \neq x \leq e$. Moreover, we have $(e,x,e) \leq (f,y,f)$ if and only if $e = f$ and $x \leq y$. Thus the non-zero idempotent $(e,x,e)$ lies under the unique maximal idempotent $(e,e,e)$. Thus $IM(S,E,p)$ satisfies (Z1).

The paths in $IM(S,E,p)$ are the elements of the form $(e,s, s^*s)$ where $0 \neq ss^* \leq e$. Given $(e,s,f)$ in $IM(S,E,P)$ we have \[
    (e,s,f) = (e,s,s^*s)(s^*s, s^*s, f) = (e,s,s^*s)(f, s^*s, s^*s)^*.
\]
Therefore $IM(S,E,p)$ satisfies (Z2).

Next let $(e,s,f), (e',t,f') \in IM(S,E,p)$, and suppose that $P_{(e,s,f)} = P_{(e',t,f')}$ and $(e,s,f) z = (e',t,f') z$ for all $z \in P_{(e,s,f)}$. Notice that $P_{(e,s,f)} = \{ (f,x,x^*x) : 0 \neq xx^* \leq s^*s\}$. Thus $f = f'$. Letting $z = (f, s, s^*s)$ we see that $(e,s,s^*s) = (e', ts^*s, s^*s)$. Thus $e = e'$ and $s \leq t$. Similarly we get that $t \leq s$. Therefore $(e,s,f) = (e',t,f')$. We have that $IM(S,E,p)$ satisfies (Z3).
\end{proof}

One can quickly see that the path category of $IM(S,E,p)$ in the above proof is singly aligned. Therefore we have the following result.

\begin{cor} Every inverse semigroup is Morita equivalent to the zigzag inverse semigroup of some singly aligned category.
\end{cor}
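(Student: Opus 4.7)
The plan is to combine three ingredients that are essentially already assembled by the preceding theorem and the remark just above the corollary: the preceding theorem shows $IM(S,E,p)$ is a zigzag inverse semigroup, so by Theorem~\ref{thm:main} it is isomorphic to $\ZM(\C)$ where $\C$ is its path category; the sentence preceding the corollary asserts that $\C$ is singly aligned; and a result of Afara and Lawson in \cite{AfaraLawson} supplies the Morita equivalence of $S$ with $IM(S,E,p)$. My proof would simply spell out the first two of these and cite the third.

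For the singly-aligned verification, I would use the description of paths in $IM(S,E,p)$ from the proof of the preceding theorem: every path is of the form $a = (e,s,s^*s)$ with $0 \neq ss^* \leq e$. Given two such paths $a = (e_a, s, s^*s)$ and $b = (e_b, t, t^*t)$, Lemma~\ref{lem:Zintersect} identifies $t(a)\C \cap t(b)\C$ with the set of $t(z)$ such that $z$ is a path with $zz^* \leq aa^*bb^*$. A direct computation gives $aa^*bb^* = (e_a, ss^*, e_a)(e_b, tt^*, e_b)$, which vanishes unless $e_a = e_b = e$ and $ss^*tt^* \neq 0$. Assuming the latter, let $\gamma_0 = ss^*tt^*$; this is a nonzero idempotent of $S$ below $e$, so $\gamma := t((e,\gamma_0,\gamma_0))$ is a genuine path in the path category. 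I would then check $t(a)\C \cap t(b)\C = \gamma\C$ by noting that every path $(e, z_0, z_0^*z_0)$ with $z_0 z_0^* \leq \gamma_0$ factors as $\gamma$ composed with $((\gamma_0,\gamma_0,\gamma_0),(\gamma_0, z_0, z_0^*z_0))$ (the needed identity $\gamma_0 z_0 = z_0$ follows from $z_0 z_0^* \leq \gamma_0$), and conversely every such composite lies in the intersection.

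For the Morita equivalence, I would cite the fact from \cite{AfaraLawson} that $IM(S,I,p)$ is Morita equivalent to $S$ whenever $p$ is a McAlister function generating a sufficient collection of idempotents, and observe that the function $p$ introduced just before the preceding theorem trivially satisfies this requirement because its image contains every idempotent of $S$. Combining: $S$ is Morita equivalent to $IM(S,E,p) \cong \ZM(\C)$ for a singly aligned LCSC $\C$.

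The main obstacle, insofar as there is one, is the singly-aligned check; the Morita step is purely a matter of invoking \cite{AfaraLawson}, and the zigzag structure is already the content of the immediately preceding theorem. Once the explicit $\gamma$ is exhibited, the verification reduces to elementary arithmetic with idempotents in $S$.
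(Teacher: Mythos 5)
Your proposal is correct and follows essentially the same route as the paper: the preceding theorem gives that $IM(S,E,p)$ is a zigzag inverse semigroup, the Morita equivalence of $S$ with $IM(S,E,p)$ is the content of the Afara--Lawson construction, and the single alignment of the path category is exactly the step the paper dismisses with ``one can quickly see.'' Your explicit witness $\gamma_0 = ss^*tt^*$ for the intersection $t(a)\C \cap t(b)\C$ checks out and simply fills in the detail the paper leaves to the reader.
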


In fact, the corollary can also be derived from Lawson's construction of a left cancellative category of an ordered groupoid with maximal identities, and the fact that the ordered groupoid of the category is an enlargement of the original groupoid \cite[Corollary 2.3.5]{LawsonCategory}.

In \cite{SteinbergMorita}, Steinberg shows that Morita equivalent inverse semigroups have Morita equivalent universal groupoids. He comments after Theorem 4.7 that the same mapping shows that they
also have Morita equivalent tight groupoids. Therefore the tight groupoid of any countable inverse semigroup is Morita equivalent to the tight groupoid of $\ZM(\C)$ for some singly aligned category $\C$. 
It follows by \cite[Corollary 7.10]{BKQS} that the class of Cuntz-Krieger $C^*$-algebras of singly aligned categories include the tight $C^*$-algebras of all countable inverse semigroups up to Morita equivalence.

\bibliographystyle{amsplain}
\bibliography{SemigroupBib.bib}
\end{document}